\newcommand{\Rm}{Rees matrix}
\newcommand{\Rms}{Rees matrix \sgp}
\newcommand{\sgp}{semi\-group}
\newcommand{\sgps}{semi\-groups}
\newcommand{\fb}{finitely based}
\newcommand{\nfb}{non\-finitely based}
\newcommand{\fg}{finitely generated}
\newcommand{\is}{involution semi\-group}
\newcommand{\iss}{involution semi\-groups}
\def\FI{\ensuremath{\mathcal{FI}}}
\def\malcev{\mathop{\hbox{$\bigcirc$\kern-9.5pt\raise1pt\hbox{\scriptsize$m$}\kern1.5pt}}}
\newcommand{\TSL}{\ensuremath{\mathcal{T\kern-.5pt S\kern-.5pt L}}}
\DeclareMathOperator{\var}{\mathsf{var}}
\newcommand{\wire}[2]{\begin{picture}(13,3)\gasset{AHnb=0,linewidth=0.3}\drawline(3,1)(8,1)\put(0,0){$#1$}\put(9,0){$#2$}\end{picture}}
\newcommand{\wirei}[2]{\begin{picture}(13,3)\gasset{AHnb=0,linewidth=0.3}\drawline(4,1)(8,1)\put(0,0){$#1$}\put(9,0){$#2$}\end{picture}}
\newcommand{\wireii}[2]{\begin{picture}(16,3)\gasset{AHnb=0,linewidth=0.3}\drawline(4,1)(8,1)\put(0,0){$#1$}\put(9,0){$#2$}\end{picture}}
\newcommand{\wires}[2]{\begin{picture}(12,3)\gasset{AHnb=0,linewidth=0.3}\drawline(3,1)(8,1)\put(0,0){$#1$}\put(9,0){$#2$}\end{picture}}
\theoremstyle{plain}
\newtheorem{theorem}{Theorem}
\newtheorem{lemma}[theorem]{Lemma}
\newtheorem{proposition}[theorem]{Proposition}
\theoremstyle{remark}
\newtheorem{remark}{Remark}
\title[The Finite Basis Problem for Kauffman Monoids]{The Finite Basis Problem\\ for Kauffman Monoids}
\author{K. Auinger}
\address{(K. Auinger) Fakult\"at f\"ur Mathematik, Universit\"at Wien, Oskar-Morgen\-stern-Platz 1,  A-1090 Wien, Austria}
\email{karl.auinger@univie.ac.at}
\author{Yuzhu Chen, Xun Hu, Yanfeng Luo}
\address{(Yuzhu Chen, Xun Hu, Yanfeng Luo) Department of Mathematics and Statistics, Key Laboratory of Applied Mathematics and Complex Systems,
Lan\-zhou University, Lanzhou, Gansu, 730000, China} \email{luoyf@lzu.edu.cn}
\address{(Xun Hu) Department of Mathematics and Statistics, Chongqing Technology and Business University,
Chongqing, 400033, China}
\author{M. V. Volkov}
\address{(M. V. Volkov) Institute of Mathematics and Computer Science, Ural Federal University,
Lenina 51, 620000 Ekaterinburg, Russia} \email{mikhail.volkov@usu.ru}
\begin{document}

\begin{abstract}
We prove a sufficient condition under which a semigroup admits no finite identity basis. As an application, it is shown that the identities
of the Kauffman monoid $\mathcal{K}_n$ are nonfinitely based for each $n\ge 3$. This result holds also for the case when $\mathcal{K}_n$ is
considered as an involution semigroup under either of its natural involutions.
\end{abstract}

\maketitle

\section*{Introduction}

\citet{TL71}, motivated by some graph-theoretic problems in
statistical mechanics, introduced what is now called the
\emph{Temperley--Lieb algebras}. These are associative linear
algebras with 1 over a commutative ring $R$. Given an integer
$n\ge 2$ and a scalar $\delta\in R$, the Temperley--Lieb algebra
$\mathcal{TL}_n(\delta)$ is generated by elements
$h_1,\dots,h_{n-1}$ subject to the relations
\begin{align}
&h_{i}h_{j}=h_{j}h_{i}    &&\text{if } |i-j|\ge 2,\ i,j=1,\dots,n-1;\label{eq:TL1}\\
&h_{i}h_{j}h_{i}=h_{i}    &&\text{if } |i-j|=1,\ i,j=1,\dots,n-1;\label{eq:TL2}\\
&h_{i}h_{i}=\delta h_{i}  &&\text{for each } i=1,\dots,n-1.\label{eq:TL3}
\end{align}
The relations \eqref{eq:TL1}--\eqref{eq:TL3} are `multiplicative', i.e., they do not involve addition. This observation suggests
introducing a monoid whose monoid algebra over $R$ could be identified with $\mathcal{TL}_n(\delta)$. A tiny obstacle is the presence of
the scalar $\delta$ in \eqref{eq:TL3}, but it can be bypassed by adding a new generator $c$ that imitates $\delta$. This way one comes to
the monoid $\mathcal{K}_n$ with $n$ generators $c,h_1,\dots,h_{n-1}$ subject to the relations \eqref{eq:TL1}, \eqref{eq:TL2}, and the
relations
\begin{align}
&h_{i}h_{i}=ch_{i}=h_{i}c   &&\text{for each } i=1,\dots,n-1,\label{eq:TL4}
\end{align}
which both mimic \eqref{eq:TL3} and mean that $c$ behaves like a
scalar. The monoids $\mathcal{K}_n$ are called the \emph{Kauffman
monoids}\footnote{The name was suggested by \citet{BDP02}; in the
literature one also meets the name \emph{Temperley--Lieb--Kauffman
monoids} \citep[see, e.g.,][]{BL05}. Kauffman himself used the
term \emph{connection monoids}.} after \citet{Ka90} who
independently invented these monoids as geometric objects. It
turns out that Kauffman monoids play a major role in several
`fashionable' parts of mathematics such as knot theory,
low-dimensional topology, topological quantum field theory,
quantum groups etc. As algebraic objects, these monoids belong to
the family of so-called diagram or Brauer-type monoids that
originally arose in representation theory and gained much
attention recently among semigroup theorists. In particular, the
first-named author (solo and with collaborators) has considered
universal-algebraic aspects of some monoids from this family such
as the finite basis problem for their identities or the
identification of the pseudovarieties generated by certain
series of such monoids \citep[see, e.g.,][]{Au14,ADV12b}. In the
present paper we follow this line of research and investigate the
finite basis problem for the identities holding in Kauffman
monoids.

Whilst it is not clear whether or not a study of the identities of Kauffman monoids may be of any use for any of their non-algebraic
applications, such a study constitutes an interesting challenge from the algebraic viewpoint since---in contrast to other types of diagram
monoids---Kauffman monoids are infinite. We recall that there exist several powerful methods to attack the finite basis problem for
\textbf{finite} semigroups (see the survey \citep{Vo01} for an overview), but, to the best of our knowledge, so far the problem has been
solved for only one natural family of concrete infinite semigroups that contains semigroups satisfying a nontrivial identity, namely, for
non-cyclic one-relator semigroups and monoids \citep{Sh89}. Here we prove that, for each $n\ge 3$, the identities of the monoid
$\mathcal{K}_n$ are not finitely based. The monoid $\mathcal{K}_2$ is commutative, and thus, its identities are finitely based.

The paper is structured as follows. In Section~\ref{sec:wire} we present geometric definitions for some classes of diagram monoids
including Kauffman monoids and so-called Jones monoids. We also summarize properties of Kauffman and Jones monoids which are essential for
the proofs of our main results. Section~\ref{sec:nfb} contains a new sufficient condition under which a semigroup admits no finite identity
basis. In Section~\ref{sec:applications} this condition is applied to the monoid $\mathcal{K}_n$ with $n\ge3$, thus showing that the
identities of $\mathcal{K}_n$ are \nfb; we also observe that the same result holds also for the case when $\mathcal{K}_n$ is considered as
an \is\ under either of its natural involutions. Besides that, we demonstrate a further application of our sufficient condition.

The fact that the identities of $\mathcal{K}_n$ with $n\ge 4$ are
\nfb\ was announced by the last-named author in his invited
lecture at the 3rd Novi Sad Algebraic Conference held in August
2009. Slides of this lecture\footnote{See
\url{http://csseminar.kadm.usu.ru/SLIDES/nsac2009/volkov_nsac.pdf}.}
included an outline of the proof for $n\ge 4$ as well as an
explicit mentioning that the case $n=3$ was left open. This case
has been recently analyzed by the first-named author and, independently and by completely
different methods, by the three
`middle-named' authors of the present paper: it turns out that
also the identities of $\mathcal{K}_3$ are \nfb. Naturally, the
authors have decided to join their results into a single article,
and so the present paper has been originated. The unified proof
presented here is based on the approach by the first-named and the
last-named authors. The alternative approach by the three
`middle-named' is of a syntactic flavor; it also has some further
applications and will be published in a separate paper.

\section{Diagrams and their multiplication}
\label{sec:wire}

The primary aim of this section is to present a geometric
definition for a series of diagram monoids which we call the
\emph{wire monoids} $\mathcal{W}_n$, $n\ge 2$. Each Kauffman
monoid $\mathcal{K}_n$ can be identified with a natural submonoid
of the corresponding wire monoid $\mathcal{W}_n$ so that a
geometric definition for the Kauffman monoids appears as a special
case. The reader should be advised that even though this geometric
definition certainly clarifies the nature of Kauffman monoids and
is crucial to their connections to various parts of mathematics,
knowing it is not really necessary for understanding the proofs in
the present paper. Therefore those readers who are mainly
interested in the finite basis problem for $\mathcal{K}_n$ may
skip the `geometric part' of this section and rely on the
definition of Kauffman monoids in terms of generators and
relations as stated in the introduction and on a similar
definition of Jones monoids given at the end of the section.

We fix an integer $n\ge 2$ and define the wire monoid $\mathcal{W}_n$. Let
$$[n]:=\{1,\dots,n\},\quad [n]':=\{1',\dots,n'\}$$
be two disjoint copies of the set of the first $n$ positive integers. The base set of $\mathcal{W}_n$ is the set of all pairs $(\pi;d)$
where $\pi$ is a partition of the $2n$-element set $[n]\cup [n]'$ into 2-element blocks and $d$ is a non-negative integer referred to as
the \emph{number of circles}. Such a pair is represented by a \emph{wire diagram} as shown in Figure~\ref{fig:diagram}.
\begin{figure}[hb]
\centering
\unitlength=.7mm
\begin{picture}(45,80)(0,10)
\drawrect[linewidth=.3](5,12,40,88)
\gasset{AHnb=0,linewidth=.5}
\multiput(5,18)(0,8){9}{\line(-1,0){2}}
\multiput(40,18)(0,8){9}{\line(1,0){2}}
\drawline(5,18)(40,50)
\drawline(5,58)(40,82)
\drawline(5,74)(40,74)
\drawcurve(5,26)(15,33)(5,42)
\drawcurve(5,34)(15,42)(5,50)
\drawcurve(5,66)(15,74)(5,82)
\drawcurve(40,18)(35,22)(40,26)
\drawcurve(40,34)(35,38)(40,42)
\drawcurve(40,58)(35,62)(40,66)
\put(-1,16){$\scriptstyle 1$}
\put(-1,24){$\scriptstyle 2$}
\put(-1,32){$\scriptstyle 3$}
\put(-1,40){$\scriptstyle 4$}
\put(-1,48){$\scriptstyle 5$}
\put(-1,56){$\scriptstyle 6$}
\put(-1,64){$\scriptstyle 7$}
\put(-1,72){$\scriptstyle 8$}
\put(-1,80){$\scriptstyle 9$}
\put(43,16){$\scriptstyle 1'$}
\put(43,24){$\scriptstyle 2'$}
\put(43,32){$\scriptstyle 3'$}
\put(43,40){$\scriptstyle 4'$}
\put(43,48){$\scriptstyle 5'$}
\put(43,56){$\scriptstyle 6'$}
\put(43,64){$\scriptstyle 7'$}
\put(43,72){$\scriptstyle 8'$}
\put(43,80){$\scriptstyle 9'$}
\drawcircle(22.5,50,3)
\drawcircle(22.5,50,6)
\drawcircle(22.5,50,9)
\end{picture}
\caption{Wire diagram representing an element of $\mathcal{W}_9$}\label{fig:diagram}
\end{figure}
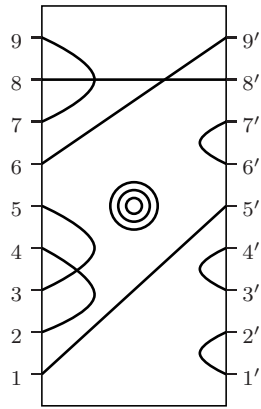
We draw a rectangular `chip' with $2n$ `pins' and represent the elements of $[n]$ by pins on the left hand side of the chip (\emph{left
pins}) while the elements of $[n]'$ are represented by pins on the right hand side of the chip (\emph{right pins}). Usually we omit the
numbers $1,2,\dots$ in our illustrations. Now, for $(\pi;d)\in \mathcal{W}_n$, we represent the number $d$ by $d$ closed curves (`circles')
drawn somewhere within the chip and each block of the partition $\pi$ is represented by a line referred to as a \emph{wire}. Thus, each
wire connects two pins; it is called an $\ell$-\emph{wire} if it connects two left pins, an $r$-\emph{wire} if it connects two right pins,
and a $t$-\emph{wire} if it connects a left pin with a right pin. The wire diagram in Figure~\ref{fig:diagram} corresponds to the pair
$$\Bigl(\bigr\{\{1,5'\},\{2,4\},\{3,5\},\{6,9'\},\{7,9\},\{8,8'\},\{1',2'\},\{3',4'\},\{6',7'\}\bigr\};\,3\Bigr).$$

Next we explain the multiplication in $\mathcal{W}_n$. Pictorially, in order to multiply two chips, we `shortcut' the right pins of the
first chip with the corresponding left pins of the second chip. Thus we obtain a new chip whose left (respectively, right) pins are the
left (respectively, right) pins of the first (respectively, second) chip and whose wires are sequences of consecutive wires of the factors,
see Figure~\ref{fig:multiplication}. All circles of the factors are inherited by the product; in addition, some extra circles may arise
from $r$-wires of the first chip combined with $\ell$-wires of the second chip.
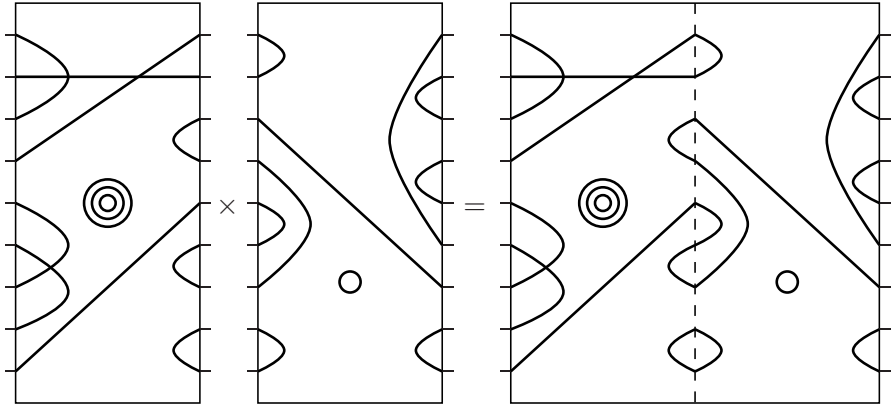
\begin{figure}[hb]
\centering
\unitlength=.7mm
\begin{picture}(170,80)(0,10)
\drawrect[linewidth=.3](5,12,40,88)
\gasset{AHnb=0,linewidth=.5}
\multiput(5,18)(0,8){9}{\line(-1,0){2}}
\multiput(40,18)(0,8){9}{\line(1,0){2}}
\drawline(5,18)(40,50)
\drawline(5,58)(40,82)
\drawline(5,74)(40,74)
\drawcurve(5,26)(15,33)(5,42)
\drawcurve(5,34)(15,42)(5,50)
\drawcurve(5,66)(15,74)(5,82)
\drawcurve(40,18)(35,22)(40,26)
\drawcurve(40,34)(35,38)(40,42)
\drawcurve(40,58)(35,62)(40,66)
\drawcircle(22.5,50,3)
\drawcircle(22.5,50,6)
\drawcircle(22.5,50,9)
\put(43,48){$\times$}
\put(46,0){\begin{picture}(50,80)
\drawrect[linewidth=.3](5,12,40,88)
\multiput(5,18)(0,8){9}{\line(-1,0){2}}
\multiput(40,18)(0,8){9}{\line(1,0){2}}
\drawline(5,66)(40,34)
\drawcurve(5,18)(10,22)(5,26)
\drawcurve(5,34)(15,46)(5,58)
\drawcurve(5,42)(10,46)(5,50)
\drawcurve(5,74)(10,78)(5,82)
\drawcurve(40,18)(35,22)(40,26)
\drawcurve(40,50)(35,54)(40,58)
\drawcurve(40,66)(35,70)(40,74)
\drawcurve(40,42)(30,62)(40,82)
\drawcircle(22.5,35,4)
\end{picture}}
\put(90,48){=}
\put(94,0){\begin{picture}(80,80)
\drawrect[linewidth=.3](5,12,75,88)
\multiput(5,18)(0,8){9}{\line(-1,0){2}}
\multiput(75,18)(0,8){9}{\line(1,0){2}}
\drawline(5,18)(40,50)
\drawline(5,58)(40,82)
\drawline(5,74)(40,74)
\drawcurve(5,26)(15,33)(5,42)
\drawcurve(5,34)(15,42)(5,50)
\drawcurve(5,66)(15,74)(5,82)
\drawcurve(40,18)(35,22)(40,26)
\drawcurve(40,34)(35,38)(40,42)
\drawcurve(40,58)(35,62)(40,66)
\drawcircle(22.5,50,3)
\drawcircle(22.5,50,6)
\drawcircle(22.5,50,9)
\drawline[linewidth=.3,dash={2.05}0](40,12)(40,88)
\put(35,0){\begin{picture}(40,80)
\drawline(5,66)(40,34)
\drawcurve(5,18)(10,22)(5,26)
\drawcurve(5,34)(15,46)(5,58)
\drawcurve(5,42)(10,46)(5,50)
\drawcurve(5,74)(10,78)(5,82)
\drawcurve(40,18)(35,22)(40,26)
\drawcurve(40,50)(35,54)(40,58)
\drawcurve(40,66)(35,70)(40,74)
\drawcurve(40,42)(30,62)(40,82)
\drawcircle(22.5,35,4)
\end{picture}}
\end{picture}}
\end{picture}
\caption{Multiplication of wire diagrams}\label{fig:multiplication}
\end{figure}

In more precise terms, if $\xi=(\pi_1;d_1)$, $\eta=(\pi_2;d_2)$,
then a left pin $p$ and a right pin $q'$ of the product $\xi\eta$
are connected by a $t$-wire if and only if one of the following
conditions holds:
\begin{trivlist}
\item[$\bullet$] \wire{p}{u'} is a $t$-wire in $\xi$ and \wirei{u}{q'} is a
$t$-wire in $\eta$ for some $u\in[n]$;
\item[$\bullet$] for some $s>1$ and some $u_1,v_1,u_2,\dots,v_{s-1},u_s\in[n]$
(all pairwise distinct), \wire{p}{u_1'} is a $t$-wire in $\xi$ and
\wirei{u_s}{q'} is a $t$-wire in $\eta$, while all
\wirei{u_i}{v_i} are $\ell$-wires in $\eta$ and all
\wireii{v_i'}{u_{i+1}'} are $r$-wires in $\xi$.
\end{trivlist}
An analogous characterization holds for the $\ell$-wires and
$r$-wires of the product. Each extra circle of $\xi\eta$
corresponds to a sequence $u_1,v_1,\dots,u_s,v_s\in[n]$ with $s\ge
1$ and pairwise distinct $u_1,v_1,\dots,u_s,v_s$ such that all
\wirei{u_i}{v_i} are $\ell$-wires in $\eta$, while all
\wireii{v_i'}{u_{i+1}'} and \wirei{v_s'}{u_1'} are $r$-wires in
$\xi$.

It easy to see that the above defined multiplication in $\mathcal{W}_n$ is associative and that the chip with 0 circles and the horizontal
$t$-wires \wires{1}{1'},\dots,\wire{n}{n'} is the identity element with respect to the multiplication. Thus, $\mathcal{W}_n$ is a monoid;
$\mathcal{W}_n$ also admits two natural unary operations. The first of them geometrically amounts to the reflection of each chip along its
vertical symmetry axis. To formally introduce this reflection, consider the permutation ${}^*$ on $[n]\cup[n]'$ that swaps primed with
unprimed elements, that is, set
$$k^*:=k',\ (k')^*:=k\text{ for all } k\in [n].$$
Then define $(\pi;d)^*:=(\pi^*;d)$, where
$$\pi^*:=\bigl\{\{x^*,y^*\}\mid \{x,y\} \text{ is a block of } \pi\bigr\}.$$
It is easy to verify that
$$ \xi^{**}=\xi,\ (\xi\eta)^*=\eta^*\xi^*\ \text{ for all }\ \xi,\eta\in\mathcal{W}_n,$$
hence the operation $\xi\mapsto\xi^*$ is an \emph{involution} of $\mathcal{W}_n$. The second unary operation on $\mathcal{W}_n$ rotates
each chip by the angle of $180$ degrees. To define it formally, let
$$\alpha:=\Bigl(\bigr\{\{1,n'\}, \{2,(n-1)'\},\dots,\{n,1'\}\bigr\};\,0\Bigr)$$
and define the unary operation $^\rho:\mathcal{W}_n\to\mathcal{W}_n$ by
$$\xi^{\rho}:= \alpha\xi^*\alpha.$$
Since $\alpha^*=\alpha$ and $\alpha^2=1$, we get that $\xi\mapsto\xi^\rho$ is also an involution on $\mathcal{W}_n$. We refer to the
involutions $^*$ and $^\rho$ as the \emph{reflection} and respectively the \emph{rotation}.

\citet{Ka90} defined the \emph{connection monoid} $\mathcal{C}_n$ as the submonoid of the wire monoid $\mathcal{W}_n$ consisting of all
elements of $\mathcal{W}_n$ that have a representation as a chip whose wires do not cross. He has shown that $\mathcal{C}_n$ is generated
by the \emph{hooks} $h_1,\dots,h_{n-1}$, where
$$h_i:=\Bigl(\bigr\{\{i,i+1\},\{i',(i+1)'\},\{j,j'\}\mid \text{for all } j\ne i,i+1\bigr\};\,0\Bigr),$$
and the \emph{circle} $c:=\Bigl(\bigr\{\{j,j'\}\mid \text{for all } j=1,\dots,n\bigr\};\,1\Bigr),$ see Figure~\ref{fig:hooks} for an
illustration.
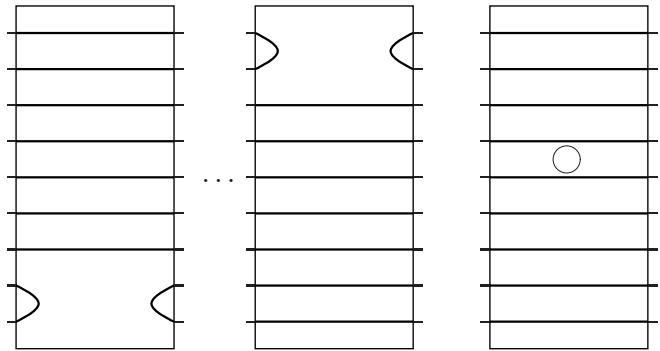
\begin{figure}[b]
\centering \unitlength=.6mm
\begin{picture}(150,80)(0,10)
\gasset{AHnb=0,linewidth=.5}
\drawrect[linewidth=.3](5,12,40,88)
\multiput(5,18)(0,8){9}{\line(-1,0){2}}
\multiput(40,18)(0,8){9}{\line(1,0){2}}
\drawcurve(5,18)(10,22)(5,26)
\drawline(5,34)(40,34)
\drawline(5,42)(40,42)
\drawline(5,50)(40,50)
\drawline(5,58)(40,58)
\drawline(5,66)(40,66)
\drawline(5,74)(40,74)
\drawline(5,82)(40,82)
\drawcurve(40,18)(35,22)(40,26)
\put(46,49){$\dots$}
\put(53,0){\begin{picture}(50,80)
\drawrect[linewidth=.3](5,12,40,88)
\multiput(5,18)(0,8){9}{\line(-1,0){2}}
\multiput(40,18)(0,8){9}{\line(1,0){2}}
\drawline(5,18)(40,18)
\drawline(5,26)(40,26)
\drawline(5,34)(40,34)
\drawline(5,42)(40,42)
\drawline(5,50)(40,50)
\drawline(5,58)(40,58)
\drawline(5,66)(40,66)
\drawcurve(5,74)(10,78)(5,82)
\drawcurve(40,74)(35,78)(40,82)
\end{picture}}
\put(105,0){\begin{picture}(50,80)
\drawrect[linewidth=.3](5,12,40,88)
\multiput(5,18)(0,8){9}{\line(-1,0){2}}
\multiput(40,18)(0,8){9}{\line(1,0){2}}
\drawline(5,18)(40,18)
\drawline(5,26)(40,26)
\drawline(5,34)(40,34)
\drawline(5,42)(40,42)
\drawline(5,50)(40,50)
\drawline(5,58)(40,58)
\drawline(5,66)(40,66)
\drawline(5,74)(40,74)
\drawline(40,82)(5,82)
\put(22,54){\circle{6}}
\end{picture}}
\end{picture}
\caption{The hooks $h_1,\dots,h_8$ and the circle $c$ in $\mathcal{C}_9$}\label{fig:hooks}
\end{figure}
It is immediate to check that the generators $h_1,\dots,h_{n-1},c$ satisfy the relations \eqref{eq:TL1}, \eqref{eq:TL2}, and
\eqref{eq:TL4}, whence there exists a homomorphism from the Kauffman monoid $\mathcal{K}_n$ onto the connection monoid $\mathcal{C}_n$. In
fact, this homomorphism turns out to be an isomorphism between $\mathcal{K}_n$ and $\mathcal{C}_n$; a proof was outlined in \citep{Ka90}
and presented in full detail in \citep{BDP02}.

Observe that the set $\{h_1,\dots,h_{n-1},c\}$ is closed under both the reflection and the rotation in $\mathcal{W}_n$: the reflection
fixes each generator, while the rotation fixes $c$ and maps $h_i$ to $h_{n-i}$ for each $i=1,\dots,n-1$. Therefore, the submonoid
$\mathcal{C}_n$ generated by $\{h_1,\dots,h_{n-1},c\}$ is also closed under these involutions that, of course, transfer to the isomorphic
monoid $\mathcal{K}_n$, as well. The reader who prefers to have a `picture-free' definition of the two involutions in Kauffman monoids may
observe that the relations \eqref{eq:TL1}, \eqref{eq:TL2}, and \eqref{eq:TL4} are left-right symmetric: each of these relations coincides
with its mirror image. Therefore, the map that fixes each generator of the monoid $\mathcal{K}_n$ uniquely extends to an involution of
$\mathcal{K}_n$; clearly, this extension is nothing but the reflection ${}^*$, and this gives a purely syntactic definition of the latter.
In a similar way, one can give a syntactic definition of the rotation ${}^\rho$: it is a unique involutary extension of the map that fixes
$c$ and swaps $h_i$ and $h_{n-i}$ for each $i=1,\dots,n-1$.

Since the involutions $\xi\mapsto\xi^*$ and $\xi\mapsto\xi^\rho$ (especially the first one) are essential for many applications of Kauffman
monoids, we find it appropriate to extend our study of the finite basis problem for the identities holding in $\mathcal{K}_n$ also to their
identities as algebras of type (2,1), with the reflection or the rotation in the role of the unary operation. The corresponding question
was stated in the last-named author's lecture mentioned in the introduction; here we will give a complete answer to it.

Let us return for a moment to the wire monoid $\mathcal{W}_n$. Denote by $\mathcal{B}_n$ the set of all $2n$-pin chips without circles, in
other words, the set of all partitions of $[n]\cup [n]'$ into 2-element blocks. Observe that this set is finite. We define the
multiplication of two chips in $\mathcal{B}_n$ as follows: we multiply the chips as elements of $\mathcal{W}_n$ and then reduce the product
to a chip in $\mathcal{B}_n$ by removing all circles. This multiplication makes $\mathcal{B}_n$ a monoid known as the \emph{Brauer monoid}:
the monoids $\mathcal{B}_n$ were introduced by \citet{Br37} as vector space bases of certain associative algebras relevant in
representation theory and thus became the historically first species of diagram monoids. We stress that even though the base set of
$\mathcal{B}_n$ has been defined as a subset in the base set of $\mathcal{W}_n$, it is \textbf{not} true that $\mathcal{B}_n$ forms a
submonoid of $\mathcal{W}_n$. On the other hand, it is easy to see that the `forgetting' map $\varphi:\mathcal{W}_n\to\mathcal{B}_n$
defined by $\varphi(\pi;d)=\pi$ is a surjective homomorphism (the homomorphism just forgets the circles of its argument).

Clearly, both the reflection and the rotation respect
$\mathcal{B}_n$ as a set and behave as anti-isomorphisms with
respect to multiplication in $\mathcal{B}_n$. Thus,
$\mathcal{B}_n$ forms an involution monoid under each of these
unary operations; moreover, the homomorphism $\varphi$ is
compatible with both involutions $^*$ and $^\rho$.
We summarize and augment the above information about the wire
monoids and the Brauer monoids in the following lemma.

\begin{lemma}
\label{lm:brauer} For each $n\ge2$, the map $\varphi:(\pi;d)\mapsto\pi$ is a homomorphism from the monoid $\mathcal{W}_n$ onto the finite
monoid $\mathcal{B}_n$; the homomorphism respects both involutions $^*$ and $^\rho$. For every idempotent in $\mathcal{B}_n$, its inverse
image under $\varphi$ is a commutative subsemigroup in $\mathcal{W}_n$.
\end{lemma}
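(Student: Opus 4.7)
The plan is to take the three assertions in turn; the first two reduce to unwinding definitions, while the third carries all the substance. For the homomorphism claim, the multiplication on $\mathcal{B}_n$ is defined in the paragraph preceding the lemma to be the multiplication in $\mathcal{W}_n$ with the circle-count discarded, so by construction $\varphi((\pi_1;d_1)(\pi_2;d_2))=\pi_1\pi_2$, and surjectivity is witnessed by $\varphi(\pi;0)=\pi$. Compatibility with the involutions is equally immediate: reflection is defined by $(\pi;d)^*=(\pi^*;d)$, which preserves $d$, and rotation $\xi^\rho=\alpha\xi^*\alpha$ inherits the same property since $\alpha$ has $0$ circles; hence $\varphi(\xi^*)=\varphi(\xi)^*$ and $\varphi(\xi^\rho)=\varphi(\xi)^\rho$.

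For the third assertion, closure of $\varphi^{-1}(\pi)$ under multiplication already forces $\pi\cdot\pi=\pi$, so the idempotency hypothesis is both natural and essential. Conversely, when $\pi$ is idempotent, $\varphi(\xi\eta)=\pi\pi=\pi$ for every $\xi,\eta\in\varphi^{-1}(\pi)$, so this fibre is indeed a subsemigroup of $\mathcal{W}_n$. Commutativity then reduces to the single point that for $\xi=(\pi;d_1)$ and $\eta=(\pi;d_2)$ the products $\xi\eta$ and $\eta\xi$ contain the same number of circles, since their partition components already coincide.

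Here I would invoke the explicit description of extra circles from just before the lemma: each extra circle of $\xi\eta$ is indexed by a cyclic sequence $u_1,v_1,\dots,u_s,v_s\in[n]$ whose pairs $\{u_i,v_i\}$ are $\ell$-wires of $\eta$ and whose pairs $\{v_i',u_{i+1}'\}$ (indices modulo $s$) are $r$-wires of $\xi$. The crucial observation is that since $\xi$ and $\eta$ share the same underlying partition $\pi$, the $\ell$-wires of $\eta$ coincide with those of $\pi$ and the $r$-wires of $\xi$ likewise coincide with those of $\pi$, so the indexing set for extra circles is in fact a purely partition-theoretic invariant of $\pi$. The same enumeration applies to $\eta\xi$ with the roles of $\xi$ and $\eta$ interchanged, and produces the same count; together with the $d_1+d_2$ circles inherited from the factors this yields $\xi\eta=\eta\xi$. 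The only delicate part is keeping the $\ell$/$r$ bookkeeping straight, but once one observes that both block types are attributes of $\pi$ alone rather than of one specific factor, the commutativity falls out immediately.
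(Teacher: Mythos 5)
Your proposal is correct and follows essentially the same route as the paper: the fibre over an idempotent $\pi$ is $\{(\pi;d)\mid d=0,1,\dots\}$, and the product of two of its elements is $(\pi;k+\ell+m)$ where the number $m$ of extra circles depends only on the $\ell$-wires and $r$-wires of $\pi$ itself, hence is symmetric in the two factors. The paper's proof is just a terser phrasing of your observation that the extra-circle count is an invariant of $\pi$ alone.
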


\begin{proof}
It remains to verify the last claim of the lemma. By the definition of $\varphi$, for each $\pi\in\mathcal{B}_n$, its inverse image under
$\varphi$ coincides with the set
$$\Pi:=\{(\pi;d)\mid d=0,1,\dots\}.$$
If $\pi^2=\pi$ in the Brauer monoid, then the product $(\pi;0)(\pi;0)$ in the wire monoid belongs to $\Pi$ whence $(\pi;0)(\pi;0)=(\pi;m)$
for some nonnegative integer $m$. Now if we multiply two arbitrary elements $(\pi;k),(\pi;\ell)\in\Pi$, we get $(\pi;k+\ell+m)$
independently of the order of the factors.
\end{proof}

The \emph{Jones monoid}\footnote{The name was suggested by
\citet{LF06} to honor the contribution of V.F.R.~Jones to the theory
\citep[see, e.g.,][Section~4]{Jo83}.} $\mathcal{J}_n$ can be
defined as the submonoid of the Brauer monoid $\mathcal{B}_n$
consisting of all elements of $\mathcal{B}_n$ that have a
representation as a chip whose wires do not cross. Thus,
$\mathcal{J}_n$ relates to $\mathcal{B}_n$ precisely as the
Kauffman monoid $\mathcal{K}_n$ (in its incarnation as the
connection monoid $\mathcal{C}_n$) relates to the wire monoid
$\mathcal{W}_n$. Alternatively, one can define the Jones monoid as
the image of the Kauffman monoid under the restriction of the
`forgetting' homomorphism $\varphi$ to the latter. Clearly,
$\mathcal{J}_n$ is closed under $^*$ and $^\rho$ and forms an
involution monoid with respect to each of these operations. The
following scheme summarizes the relations between the four species
of diagram monoids introduced so far:
$$
\begin{CD}
\mathcal{W}_n @>\varphi>> \mathcal{B}_n\\
@AAA                       @AAA \\
\mathcal{K}_n @>\varphi>> \mathcal{J}_n
\end{CD}.
$$
The vertical arrows here stand for embeddings, the horizontal ones for surjections, and all maps respect multiplication and both
involutions.

The following fact is just a specialization of Lemma~\ref{lm:brauer}.

\begin{lemma}
\label{lm:jones} For each $n\ge2$, the map $\varphi:(\pi;d)\mapsto\pi$ is a homomorphism from the monoid $\mathcal{K}_n$ onto the finite
monoid $\mathcal{J}_n$; the homomorphism respects both involutions $^*$ and $^\rho$. For every idempotent in $\mathcal{J}_n$, its inverse
image under $\varphi$ is a commutative subsemigroup in $\mathcal{K}_n$.
\end{lemma}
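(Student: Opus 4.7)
The plan is to deduce Lemma~\ref{lm:jones} as an immediate specialization of Lemma~\ref{lm:brauer}. The key identifications to invoke are already established in the preceding discussion: via the isomorphism $\mathcal{K}_n\cong\mathcal{C}_n$, the Kauffman monoid embeds as a submonoid of the wire monoid $\mathcal{W}_n$, and the Jones monoid $\mathcal{J}_n$ is by definition the image of that submonoid under the `forgetting' map $\varphi$ (equivalently, the non-crossing submonoid of $\mathcal{B}_n$). Restricting $\varphi$ to $\mathcal{K}_n$ therefore gives a surjective monoid homomorphism $\mathcal{K}_n\to\mathcal{J}_n$; and because $\mathcal{K}_n$ is closed under both $^*$ and $^\rho$ and these commute with $\varphi$ on all of $\mathcal{W}_n$, the restriction still respects both involutions.

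For the final assertion, let $\pi\in\mathcal{J}_n$ be an idempotent. Since $\mathcal{J}_n$ is a submonoid of $\mathcal{B}_n$, $\pi$ is also idempotent in $\mathcal{B}_n$, so Lemma~\ref{lm:brauer} applies and the full fibre $\varphi^{-1}(\pi)\subseteq\mathcal{W}_n$ is a commutative subsemigroup. The set of interest, namely the preimage of $\pi$ under $\varphi|_{\mathcal{K}_n}$, is precisely $\varphi^{-1}(\pi)\cap\mathcal{K}_n$. This intersection is closed under multiplication in $\mathcal{K}_n$ because it is the preimage of the idempotent $\pi$ under a homomorphism, and it inherits commutativity as a subset of a commutative semigroup.

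The entire argument is formal: the substantive content, namely that the number of circles accumulates additively when multiplying two elements whose underlying partition is a common idempotent, was already handled in Lemma~\ref{lm:brauer}, so I do not anticipate a real obstacle. The only subtle point worth flagging is that the preimage in $\mathcal{K}_n$ of an idempotent $\pi\in\mathcal{J}_n$ need not coincide with the full set $\{(\pi;d)\mid d\ge 0\}\subseteq\mathcal{W}_n$, since some values of $d$ might yield elements outside $\mathcal{K}_n$; but commutativity is inherited by arbitrary subsets, so this distinction does not affect the conclusion.
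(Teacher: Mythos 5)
Your proposal is correct and matches the paper's own treatment: the paper simply declares Lemma~\ref{lm:jones} to be a specialization of Lemma~\ref{lm:brauer}, obtained by restricting $\varphi$ to the submonoid $\mathcal{K}_n$, exactly as you argue. Your extra remark that the fibre in $\mathcal{K}_n$ may be a proper subset of $\{(\pi;d)\mid d\ge 0\}$ yet still inherits commutativity is a valid and correctly handled refinement of the same argument.
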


As promised at the beginning of this section, we conclude with showing how one may bypass geometric considerations and define the Jones
monoid in terms of generators and relations. Since the monoid $\mathcal{J}_n$ is the image of $\mathcal{K}_n$ under $\varphi$, it is
generated by the hooks $h_1,\dots,h_{n-1}$ and the following relations hold in $\mathcal{J}_n$:
$$h_{i}h_{j}=h_{j}h_{i}\qquad\qquad \text{if } |i-j|\ge 2,\ i,j=1,\dots,n-1;$$
\begin{align}
&h_{i}h_{j}h_{i}=h_{i} &&\text{if } |i-j|=1,\ i,j=1,\dots,n-1;\label{eq:jones}\\
&h_{i}h_{i}=h_{i}.     &&\text{for each } i=1,\dots,n-1.\notag
\end{align}
In fact, it can be verified \citep{BDP02} that the monoid generated by $h_1,\dots,h_{n-1}$ subject to the relations \eqref{eq:jones}, i.e.,
the monoid that spans the Temperley--Lieb algebra $\mathcal{TL}_n(\delta)$ with $\delta=1$, is isomorphic to $\mathcal{J}_n$. Thus, one can
define $\mathcal{J}_n$ by this presentation. Lemma~\ref{lm:jones} can be then recovered as follows. The homomorphism
$\varphi:\mathcal{K}_n\twoheadrightarrow\mathcal{J}_n$ arises in this setting as a unique homomorphic extension of the map that sends the
generators $h_1,\dots,h_{n-1}$ of $\mathcal{K}_n$ to the generators of $\mathcal{J}_n$ with the same names and `erases' the generator $c$
by sending it to 1; the fact that such an extension exists and enjoys all properties registered in Lemma~\ref{lm:jones} readily follows
from the close similarity between the relations \eqref{eq:TL1}, \eqref{eq:TL2}, \eqref{eq:TL4} on the one hand and the relations
\eqref{eq:jones} on the other hand. The only claim in Lemma~\ref{lm:jones} which is not that apparent with this definition of
$\mathcal{J}_n$ is the finiteness of the monoid. This indeed requires some work \citep[see][for details]{BDP02}. From the diagrammatic
representation it can be easily calculated that the cardinality of $\mathcal{J}_n$ is the $n$-th Catalan number
$\dfrac{1}{n+1}\dbinom{2n}{n}$. For further interesting results concerning the monoids $\mathcal{K}_n$, $\mathcal{J}_n$ and similarly
defined ones the reader may consult \citep{DP03}.

\section{A sufficient condition for the non-existence of a finite basis}
\label{sec:nfb}

We assume the reader's familiarity with basic concepts of the theory of varieties \citep[see, e.g.,][Chapter~II]{BS81} and of semigroup
theory \citep[see, e.g.,][Chapter~1]{CP61}.

We aim to establish a condition for the nonfinite basis property that would apply to both `plain' semigroups and semigroups with involution
as algebras of type (2,1). The two cases have much in common, and we use square brackets to indicate adjustments to be made in the
involution case. First, let us formally introduce \iss.

An algebra $\mathcal{S}=\langle S,\cdot,{}^\star\rangle$ of type (2,1) is called an \emph{\is} if $\langle S,\cdot\rangle$ is a semigroup
(referred to as the \emph{semigroup reduct} of $\mathcal{S}$) and the identities
$$(xy)^\star\bumpeq y^\star x^\star \text{ and  } (x^\star)^\star\bumpeq x$$
hold, in other words, if the unary operation $x\mapsto x^\star$ is an involutory anti-automorphism of $\langle S,\cdot\rangle$.

The \emph{free \is} $\FI(X)$ on a given alphabet $X$ can be constructed as follows.  Let $\overline{X}:=\{x^\star\mid x\in X\}$ be a
disjoint copy of $X$. Define $(x^\star)^\star:=x$ for all $x^\star\in \overline{X}$. Then $\FI(X)$ is the free \sgp\
$(X\cup\overline{X})^+$ endowed with the involution defined by
$$(x_1\cdots x_m)^\star:= x_m^\star\cdots x_1^\star$$
for all $x_1,\dots,x_m\in X\cup \overline{X}$. We refer to elements of $\FI(X)$ as \emph{involutory words over $X$} while elements of the
free semigroup $X^+$ will be referred to as \emph{plain words over $X$}.

If an \is\ $\mathcal{T}=\langle T,\cdot,{}^\star\rangle$ is generated by a set $Y\subseteq T$, then every element in $\mathcal{T}$ can be
represented by an involutory word over $Y$ and thus by a plain word over $Y\cup\overline{Y}$ where $\overline{Y}=\{y^\star\mid y\in Y\}$.
Hence the reduct $\langle T,\cdot\rangle$ is generated by the set $Y\cup\overline{Y}$; in particular, $\mathcal{T}$ is \fg\ if and only if
so is $\langle T,\cdot\rangle$. Recall that an algebra is said to be \emph{locally finite} if each of its \fg\ subalgebras is finite. From
the above remark, it follows that an \is\ $\mathcal{S}=\langle S,\cdot,{}^\star\rangle$ is locally finite if and only if so is $\langle
S,\cdot\rangle$. We denote by $\mathbf{L}$ the class of all locally finite semigroups. A variety of [involution] semigroups is
\emph{locally finite} if all its members are locally finite. Given a class $\mathbf{K}$ of [involution] semigroups, we denote by
$\var\mathbf{K}$ the variety of [involution] \sgps\ it generates; if $\mathbf{K}=\{\mathcal{S}\}$, we write $\var\mathcal{S}$ rather than
$\var\{\mathcal{S}\}$.

Let $\mathbf{A}$ and $\mathbf{B}$ be two subclasses of a fixed class $\mathbf{C}$ of algebras. The \emph{Mal'cev product}
$\mathbf{A}\malcev\mathbf{B}$ of $\mathbf{A}$ and $\mathbf{B}$ (within $\mathbf{C}$) is the class of all algebras
$\mathcal{C}\in\mathbf{C}$ for which there exists a congruence $\theta$ such that the quotient algebra $\mathcal{C}/\theta$ lies in
$\mathbf{B}$ while all $\theta$-classes that are subalgebras in $\mathcal{C}$ belong to $\mathbf{A}$. Note that for a congruence $\theta$
on a semigroup $\mathcal{S}$, a congruence class $s\theta$ forms a subsemigroup of $\mathcal{S}$ if and only if the element $s\theta$ is an
idempotent of the quotient $\mathcal{S}/\theta$. Of essential use will be a powerful result by \citet{Br68,Br71} that can be stated in
terms of the Mal'cev product as follows.

\begin{proposition}[\mdseries\citep{Br68,Br71}]
\label{pr:brown} $\mathbf{L}\malcev\mathbf{L}=\mathbf{L}$ where the
Mal'cev product is considered within the class of all semigroups.
\end{proposition}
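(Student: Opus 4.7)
The inclusion $\mathbf{L}\subseteq\mathbf{L}\malcev\mathbf{L}$ is immediate from the identity congruence and will be dispatched in one line. For the reverse inclusion, my plan is as follows. Assume $\theta$ is a congruence on a semigroup $S$ with $S/\theta\in\mathbf{L}$ and every idempotent $\theta$-class locally finite, and fix a finitely generated subsemigroup $T=\langle X\rangle\le S$ with $X$ finite; set $\rho:=\theta\cap(T\times T)$. The first observation is that $\overline T:=T/\rho$ is generated by the finite image of $X$ and embeds into $S/\theta\in\mathbf{L}$, hence $\overline T$ is finite; write $n:=|\overline T|$. Since $T$ is the disjoint union of its finitely many $\rho$-classes, the task reduces to showing that every $\rho$-class in $T$ is finite.

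For each idempotent $\bar e\in\overline T$, the class $T_{\bar e}$ is a subsemigroup of $T$ lying inside the $\theta$-class of $\bar e$ in $S$, which is locally finite by hypothesis; so I would reduce finiteness of $T_{\bar e}$ to finite \emph{generation} of $T_{\bar e}$ as a subsemigroup of $T$. For a non-idempotent class $T_{\bar t}$, I would choose $k,\ell\le n$ with $\bar t^k=\bar t^{k+\ell}$, form the idempotent $\bar e:=\bar t^{k\ell}$ in the cyclic subsemigroup of $\overline T$ generated by $\bar t$, and realise $T_{\bar t}$ as a translate of a subset of $T_{\bar e}$ by left-multiplication with a fixed element of an appropriate $\rho$-class; this reduces the general case to the idempotent case.

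The main step, and the principal obstacle, is the proof that each idempotent $\rho$-class $T_{\bar e}$ is finitely generated. The tool I would use is the classical idempotent-subword lemma for finite semigroups: there exists $N=N(n)$ such that any product of at least $N$ elements of $\overline T$ contains a contiguous subproduct that is idempotent. Applied to images in $\overline T$ of words over $X$, this says that every sufficiently long word in $X$ factors as $u\,v\,w$ where $v$ lies in some idempotent $\rho$-class. I would combine this with a Schreier-style transversal argument---pick representatives $\{r_{\bar t}\}_{\bar t\in\overline T}$ for the $\rho$-classes and rewrite each product $r_{\bar t}\,x$ as $r_{\bar t\bar x}\cdot s_{\bar t,x}$ with $s_{\bar t,x}$ forced into a designated idempotent class---to extract a finite generating set for each $T_{\bar e}$. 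The absence of cancellation in $T$ is what makes this rewriting more delicate than its group-theoretic analogue, and the idempotent-subword lemma is exactly what makes the procedure terminate.
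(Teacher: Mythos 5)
First, a point of reference: the paper does not prove this statement at all --- it is Brown's theorem, imported as a black box from \citet{Br68,Br71} --- so the only question is whether your argument stands on its own. It does not. Your central step, that each idempotent $\rho$-class $T_{\bar e}$ of the finitely generated semigroup $T$ is finitely generated as a subsemigroup, is false in the generality in which you argue for it. Take $T$ to be the free semigroup on $\{a,b\}$ and $\rho$ the kernel of the homomorphism onto the two-element semilattice $\{1,0\}$ sending $a\mapsto 1$ and $b\mapsto 0$: the idempotent class over $0$ is the set of all words containing at least one $b$, and every word $a^nba^m$ is indecomposable within that class, so the class is not finitely generated. The finite-generation argument you sketch (transversal rewriting plus the idempotent-subword lemma) makes no use of the local finiteness of the idempotent classes, so if it were correct it would apply verbatim to this example, where the conclusion fails. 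In other words, finite generation would have to be extracted from the local finiteness hypothesis itself, and doing that is essentially the entire content of Brown's theorem rather than a preliminary reduction.

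The two concrete devices you propose also break down in a semigroup. The Schreier rewriting $r_{\bar t}\,x = r_{\bar t\bar x}\cdot s_{\bar t,x}$ asks you to solve an equation for $s_{\bar t,x}$; without inverses or cancellation there is no reason such an element exists, so the transversal machinery cannot even be set up. Similarly, realising a non-idempotent class $T_{\bar t}$ as a translate of a subset of an idempotent class requires factoring an arbitrary (possibly indecomposable) element of $T_{\bar t}$ through a fixed left factor, which again need not be possible. Your instinct that the Ramsey-type statement ``every long product in a finite semigroup contains a consecutive idempotent subproduct'' is the right combinatorial ingredient is sound --- it is indeed the engine of Brown's 1971 proof --- but there it is used to find, inside every sufficiently long word over the generators, a long run of consecutive blocks all of whose concatenations map to one and the same idempotent, and this is fed into a delicate induction that bounds the number of distinct elements representable by words of each length. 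That mechanism is genuinely different from a Schreier transversal, and the distance between the two is exactly where your proof stops.
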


Let $x_1,x_2,\dots,x_n,\dots$ be a sequence of letters. The sequence $\{Z_n\}_{n=1,2,\dots}$ of \emph{Zimin words} is defined inductively
by $Z_1:=x_1$, $Z_{n+1}:=Z_nx_{n+1}Z_n$. We say that a word $v$ is an [\emph{involutory}] \emph{isoterm} for a class $\mathbf{C}$ of
semigroups [with involution] if the only [involutory] word $v'$ such that all members of $\mathbf{C}$ satisfy the [involution] semigroup
identity $v\bumpeq v'$ is the word $v$ itself.

If a semigroup $\mathcal{S}$ satisfies the identities $x^2y\bumpeq
x^2\bumpeq yx^2$, then $\mathcal{S}$ has a zero and the value of
the word $x^2$ in $\mathcal{S}$ under every evaluation of the
letter $x$ in $\mathcal{S}$ is equal to zero. Having this in mind,
we use the expression $x^2\bumpeq 0$ as an abbreviation for the
identities $x^2y\bumpeq x^2\bumpeq yx^2$.

The last ingredient that we need comes from
\citep[Proposition~3]{Sa87} for the plain case and from
\citep[Corollary~2.6]{ADV12a} for the involution case.

\begin{proposition}[\mdseries\citep{Sa87,ADV12a}]
\label{pr:sapir} Let $\mathbf{V}$ be a variety of $[$involution$]$ \sgps. If
\begin{enumerate}
\item[(i)] all members of\, $\mathbf{V}$ satisfying $x^2\bumpeq 0$ are
locally finite and
\item [(ii)] each Zimin word is an $[$involutory$]$ isoterm relative to $\mathbf{V}$,
\end{enumerate}
then $\mathbf{V}$ is \nfb.
\end{proposition}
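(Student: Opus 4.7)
The plan is to argue by contradiction. Assume $\mathbf{V}$ is \fb\ by a finite set $\Sigma$ of identities, and let $m$ strictly exceed the length of every $[$involutory$]$ word occurring in $\Sigma$. For each $n$ with $2^{n}-1>m$ (so the Zimin word $Z_{n}$ is longer than any identity in $\Sigma$), I shall build from $Z_{n}$ an $[$involution$]$ semigroup $\mathcal{S}_{n}\in\mathbf{V}$ that is $n$-generated and satisfies $x^{2}\bumpeq 0$. Hypothesis (i) will then make each $\mathcal{S}_{n}$ finite, and a suitable assembly of the family $\{\mathcal{S}_{n}\}$ will, via Brown's Proposition~\ref{pr:brown}, produce the contradiction.

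The construction of $\mathcal{S}_{n}$ is of Sapir-type. Take the set of $[$involutory$]$ factors of $Z_{n}$, quotient by the congruence $\sim_{\Sigma}$ obtained by applying $\Sigma$ to factors, adjoin a zero $0$, and set $[u]\cdot[v]:=[uv]$ when $uv$ is (equivalent to) a factor of $Z_{n}$ and $0$ otherwise. Since each $Z_{n}$ is a palindrome (by induction using $Z_{k+1}=Z_{k}x_{k+1}Z_{k}$ and the fact that the generating involution fixes every $x_{i}$), the reversal on $\FI(X_{n})$ descends to $\mathcal{S}_{n}$. The classes of the letters $x_{1},\ldots,x_{n}$ generate $\mathcal{S}_{n}$, and because $Z_{n}$ is square-free one has $\mathcal{S}_{n}\models x^{2}\bumpeq 0$. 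The main obstacle is verifying $\mathcal{S}_{n}\models\Sigma$ and hence $\mathcal{S}_{n}\in\mathbf{V}$: for every $u\bumpeq v$ in $\Sigma$ and every $[$involutory$]$ substitution $\phi$ into $\mathcal{S}_{n}$ with $\phi(u)\ne 0$, one must show $\phi(u)=\phi(v)$ in $\mathcal{S}_{n}$. The key mechanism is the self-similar recursion $Z_{k+1}=Z_{k}x_{k+1}Z_{k}$: since $\phi(u)$ is a factor of length at most $m<2^{n}-1$, it sits inside some $Z_{k}$ with $k\le n$, and the $[$involutory$]$ isoterm property (ii) of $Z_{k}$ forces $\phi(u)$ and $\phi(v)$ to represent the same factor.

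Once $\mathcal{S}_{n}\in\mathbf{V}\cap[x^{2}\bumpeq 0]$ is established, hypothesis (i) makes $\mathcal{S}_{n}$ locally finite, hence finite. To turn this into a contradiction, I would combine the $\mathcal{S}_{n}$ into a single $[$involution$]$ semigroup $\mathcal{S}\in\mathbf{V}$ — for instance through an ideal extension or an inverse-limit-style assembly — equipped with a congruence $\theta$ such that $\mathcal{S}/\theta\in\mathbf{L}$ and every $\theta$-class that is a subsemigroup lies in $\mathbf{V}\cap[x^{2}\bumpeq 0]\subseteq\mathbf{L}$. Brown's Proposition~\ref{pr:brown} then places $\mathcal{S}$ in $\mathbf{L}\malcev\mathbf{L}=\mathbf{L}$. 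This will contradict the iterated use of (ii), which yields unboundedly many pairwise non-$\sim_{\Sigma}$-equivalent factors of $Z_{n}$ and thereby a \fg\ infinite sub-$[$involution$]$-semigroup of $\mathcal{S}$.

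The anticipated hardest step is the combinatorial verification $\mathcal{S}_{n}\models\Sigma$ described above: one must rule out, by a careful case analysis, that any identity in $\Sigma$ of length $\le m$ could fail inside the factor structure of $Z_{n}$. In the involution setting, the palindrome symmetry of $Z_{n}$ requires tracking the reversal through the subword combinatorics — concretely, every $[$involutory$]$ substitution into $\mathcal{S}_{n}$ has the same shape as a plain substitution because factors of $Z_{n}$ contain no starred letters, and reversal on a factor simply reads it backwards inside $Z_{n}^{\star}=Z_{n}$. This is precisely the place where the involutory version of (ii) is used in full, and it is the only part of the argument that is genuinely different between the plain and the involution cases.
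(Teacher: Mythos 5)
First, note that the paper does not prove Proposition~\ref{pr:sapir} at all: it is imported as a black box from \citep[Proposition~3]{Sa87} in the plain case and \citep[Corollary~2.6]{ADV12a} in the involution case, so there is no in-paper proof to match yours against. Judged on its own, your proposal has a genuine gap at its logical core. The semigroups $\mathcal{S}_n$ you build (classes of factors of $Z_n$, plus a zero) are \emph{finite by construction}---$Z_n$ has only finitely many factors---so hypothesis (i) does no work when you apply it to them, and producing members of $\mathbf{V}$ satisfying $x^2\bumpeq 0$ that happen to be finite contradicts nothing. The only way to contradict (i) is to exhibit a \emph{finitely generated infinite} member of $\mathbf{V}$ satisfying $x^2\bumpeq 0$, and that is exactly what your final ``assembly'' step would have to deliver; but the step is left unspecified, and the claim that ``unboundedly many pairwise non-$\sim_{\Sigma}$-equivalent factors of $Z_n$'' yield a finitely generated infinite subsemigroup is false as stated: those factors involve unboundedly many distinct letters $x_1,\dots,x_n$, so they do not all live in any finitely generated subsemigroup of your assembled $\mathcal{S}$. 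To descend to a fixed finite alphabet one must substitute words for the $x_i$ and then control which identities of $\Sigma$ can act on the substituted instances; this is the combinatorial heart of Sapir's argument (the avoidable/unavoidable dichotomy, Zimin's characterization of unavoidable words in $m$ letters as divisors of $Z_m$, and the existence of an infinite word over a fixed finite alphabet simultaneously avoiding all avoidable words in at most $m$ letters, whose factor semigroup is the desired finitely generated infinite witness). None of this appears in your outline.

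Two smaller points. Your appeal to Brown's Proposition~\ref{pr:brown} is misplaced: in the architecture of this paper Brown's theorem serves to \emph{verify} hypothesis (i) for concrete varieties (Lemma~\ref{lm:core}); it plays no role in the proof of Proposition~\ref{pr:sapir}, and your sketch exhibits no actual congruence $\theta$ to which it could be applied. On the positive side, your local verification that $\mathcal{S}_n\models\Sigma$ is essentially salvageable: if $\phi(u)$ is a nonzero factor, write $Z_n=p\,\phi(u)\,q$; then $Z_n\bumpeq p\,\phi(v)\,q$ is a consequence of $u\bumpeq v$, and the isoterm hypothesis (ii) forces $\phi(u)=\phi(v)$ as words. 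But this only re-proves that certain finite semigroups lie in $\mathbf{V}$, which, again, is not where the contradiction can come from.
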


In the following we shall present a specialization of Proposition \ref{pr:sapir} by presenting a sufficient condition for a variety
$\mathbf{V}$ to satisfy condition (i). An essential step towards this result is the next lemma whose proof is a refinement of one of the
crucial arguments in \citep{SV94}. Here $\mathbf{Com}$ denotes the variety of all commutative semigroups.

\begin{lemma}
\label{lm:core} Let $\mathcal{T}$ be a semigroup in $\mathbf{Com}\malcev\mathbf{L}$ and let $I$ be the ideal of $\mathcal{T}$ generated by
$\{t^2\mid t\in\mathcal{T}\}$. Then the Rees quotient $\mathcal{T}/I$ is locally finite.
\end{lemma}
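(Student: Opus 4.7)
The plan is to combine Proposition~\ref{pr:brown} with a structural analysis of $\mathcal{T}/I$ via the congruence $\theta$ and an induction on the size of the Mal'cev quotient. First I would reduce to the finitely generated case: since $\mathbf{Com}$ and $\mathbf{L}$ are both closed under subsemigroups, so is $\mathbf{Com}\malcev\mathbf{L}$; any finitely generated subsemigroup of $\mathcal{T}/I$ lifts to a finitely generated $S \le \mathcal{T}$, and the ideal $I_S$ of $S$ generated by its own squares is contained in $S \cap I$, so it suffices to prove that $S/I_S$ is finite. Thus I may assume $\mathcal{T}$ is itself finitely generated; then $F := \mathcal{T}/\theta$ is a finitely generated member of $\mathbf{L}$, hence finite.

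Next I would set up a Brown-style reduction. Let $J := \{[C] \in F : C \cap I \neq \emptyset\}$. Since $I$ is an ideal and $\theta$ a congruence, $J$ is an ideal of $F$; its preimage $T := \bigcup \{C : [C] \in J\}$ is an ideal of $\mathcal{T}$ containing $I$. The maps $\mathcal{T} \twoheadrightarrow \mathcal{T}/I$ and $\mathcal{T} \twoheadrightarrow F \twoheadrightarrow F/J$ cofactor through a common surjection $\mathcal{T}/I \twoheadrightarrow F/J$, which yields a congruence $\rho$ on $\mathcal{T}/I$ with quotient $F/J \in \mathbf{L}$. The $\rho$-classes consist of one copy of each $\theta$-class $C$ with $C \cap I = \emptyset$, sitting inside $\mathcal{T}/I \setminus \{0\}$, together with one ``zero class'' $\bar Z$ which, as a subsemigroup of $\mathcal{T}/I$, is canonically isomorphic to the Rees quotient $T/I$. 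If a class $C$ of the first kind were a subsemigroup, $C$ would be a subsemigroup of $\mathcal{T}$, hence commutative by the Mal'cev hypothesis; but then $x^2 \in C$ for every $x \in C$, contradicting $C \cap I = \emptyset$. So the only idempotent $\rho$-class is $\bar Z$, and Proposition~\ref{pr:brown} reduces the claim to showing that $T/I \in \mathbf{L}$.

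I would then proceed by induction on $|F|$. In the base case $|F| = 1$, $\mathcal{T}$ is a single $\theta$-class, hence a commutative subsemigroup, and for such a finitely generated commutative semigroup the Rees quotient by the ideal of squares is finite by direct inspection, since its non-zero elements correspond to squarefree products over the finite generating set. In the inductive step, if $J \subsetneq F$, then $T$ carries the restricted witness $\theta|_T$ whose quotient has cardinality $|J| < |F|$, and the induction hypothesis applied to $T$ yields $T/I \in \mathbf{L}$.

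The main obstacle is the residual case $J = F$ --- every $\theta$-class of $\mathcal{T}$ touches $I$, so $T = \mathcal{T}$ --- in which the induction on $|F|$ does not strictly decrease. I expect to handle this by a factorization-forest or iterated-pigeonhole argument, refining the technique from \citep{SV94}: any sufficiently long word $w$ in the finite generating set of $\mathcal{T}$ must contain a factor $v$ whose image in $F$ is some idempotent $e$, so $v$ lies in the commutative class $C_e$; commutativity of $C_e$ together with $v^2 \in I$ then permits one to rearrange factors of $w$ so that an explicit square subword appears in $w$, forcing $w \in I$. Bounding the length of words evaluating outside $I$ by a function of $|F|$ and the number of generators gives the finiteness of $\mathcal{T}\setminus I$ and closes the argument.
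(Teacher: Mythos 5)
Your reduction to the finitely generated case, the base case $|F|=1$, and the observation that the only idempotent $\rho$-class of $\mathcal{T}/I$ that is a subsemigroup is the zero class $\bar Z\cong T/I$ are all sound, as is the inductive step when $J\subsetneq F$ (modulo the harmless distinction between $I$ and the ideal of $T$ generated by squares of elements of $T$). But the case $J=F$ is not a residual technicality: every chain of your reductions terminates either in the commutative base case or in this case, so it carries essentially the full weight of the lemma, and the argument you sketch for it does not work. A single factor $v$ of $w$ whose value lies in a commutative class $C_e$ gives you nothing --- there is no second occurrence of $v$ to commute it past, so no square can be produced. Even the stronger conclusion available from Ramsey's theorem, namely a block of consecutive factors $u_1,\dots,u_r$ of $w$ whose values and all of whose consecutive partial products lie in $C_e$, does not help: commutativity of $C_e$ lets you permute the $u_i$ arbitrarily, but permuting pairwise distinct elements of an infinite commutative semigroup never creates a repeated adjacent factor, hence never an explicit square, and nothing forces two of the $u_i$ to coincide, neither as elements of $\mathcal{T}$ nor as words (arbitrarily long squarefree words exist over three letters). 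So the claimed bound on the length of words evaluating outside $I$ is unsubstantiated, and with it the whole proof.

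The paper avoids this trap by never isolating the zero class. Writing $\alpha$ for the witness congruence and $\rho_I$ for the Rees congruence of $I$, it passes to $\beta=\alpha\cap\rho_I$ and applies Proposition~\ref{pr:brown} to the congruence $\alpha/\beta$ on $\mathcal{T}/\beta$, whose quotient is $\mathcal{T}/\alpha$ (not $F/J$) and whose idempotent classes are the images $A/\beta$ of the commutative $\alpha$-classes $A$. The point is that these classes can be shown locally finite \emph{in advance}: since $\alpha$ and $\beta$ agree on $I$, the ideal $I/\beta=I/\alpha$ is periodic; every square in $\mathcal{T}/\beta$ falls into it, so $\mathcal{T}/\beta$ is periodic; and a commutative periodic semigroup is locally finite. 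Brown's theorem then yields that $\mathcal{T}/\beta$, and hence its quotient $\mathcal{T}/I$, is locally finite. To salvage your outline you would need precisely this kind of input in the case $J=F$ (or an independent combinatorial argument of genuinely Brown-type difficulty); as written, that case is a gap.
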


\begin{proof}
Let $\alpha$ be a congruence on $\mathcal{T}$ such that $\mathcal{T}/\alpha$ is locally finite and the idempotent $\alpha$-classes are
commutative subsemigroups of $\mathcal{T}$. Let $\rho_I$ be the Rees congruence of $\mathcal{T}$ corresponding to the ideal $I$ and
$\beta=\alpha\cap\rho_I$. We have the following commutative diagram in which all homomorphisms are canonical projections.
\begin{center}
\unitlength=.5mm
\begin{picture}(60,35)
\gasset{Nframe=n,AHnb=2,AHlength=0}
\node(N)(30,35){$\mathcal{T}$}
\node(S)(30,5){}
\node(Y)(30,3){$\mathcal{T}/\beta$}
\node(W)(0,20){}
\node(Z)(-2,20){$\mathcal{T}/\alpha$}
\node(E)(60,20){}
\node(X)(75,20){$\mathcal{T}/\rho_I=\mathcal{T}/I$}
\drawedge(N,S){}
\drawedge(N,W){}
\drawedge(N,E){}
\drawedge(S,W){}
\drawedge(S,E){}
\end{picture}
\end{center}

Recall that a semigroup is said to be \emph{periodic} if each of its one-generated subsemigroups is finite. The semigroup
$\mathcal{T}/\alpha$ is locally finite and thus periodic. Moreover, since the restrictions of $\alpha$ and $\beta$ to the ideal $I$
coincide, we have $I/\alpha=I/\beta$ whence $I/\beta$ is periodic, as well. Since for each element of $\mathcal{T}/\beta$, its square
belongs to $I/\beta$, it follows that $\mathcal{T}/\beta$ is also periodic, and so is each subsemigroup of $\mathcal{T}/\beta$.

Now let $A\in\mathcal{T}/\alpha$ be an idempotent $\alpha$-class; by assumption, $A$ is a commutative subsemigroup of $\mathcal{T}$. Then
the inverse image of $A$ (considered as an element of $\mathcal{T}/\alpha$) under the canonical projection
$\mathcal{T}/\beta\twoheadrightarrow\mathcal{T}/\alpha$ is the subsemigroup $A/\beta$ of $\mathcal{T}/\beta$, and this subsemigroup is at
the same time commutative and periodic. It is well known (and easy to verify) that every commutative periodic semigroup is locally finite.
We see that the congruence $\alpha/\beta$ on $\mathcal{T}/\beta$ satisfies the two conditions: (a) the quotient
$(\mathcal{T}/\beta)\text{\Large/}(\alpha/\beta)\cong\mathcal{T}/\alpha$ is locally finite and (b) the $\alpha/\beta$-classes which are
subsemigroups are locally finite. By Proposition~\ref{pr:brown}, $\mathcal{T}/\beta$ is itself  locally finite, and so is its quotient
$\mathcal{T}/I$.
\end{proof}

For two  \sgp\ varieties $\mathbf{V}$ and $\mathbf{W}$, their Mal'cev product $\mathbf{V}\malcev\mathbf{W}$ within the class of all
semigroups may fail to be a variety but it is always closed under forming sub\sgps\ and direct products \citep[see][Theorems~1
and~2]{Ma67}. Therefore the variety $\var(\mathbf{V}\malcev\mathbf{W})$ generated by $\mathbf{V}\malcev\mathbf{W}$ is comprised of all
homomorphic images of the members of $\mathbf{V}\malcev\mathbf{W}$. We are now in a position to formulate and to prove our main result.

\begin{theorem} \label{thm:main} A variety $\mathbf{V}$ of $[$involution$]$ semigroups is nonfinitely based provided that
\begin{enumerate}
\item[(i)] $[$the class of all semigroup reducts of\,$]$ $\mathbf{V}$ is contained in the variety
$\var(\mathbf{Com}\malcev\mathbf{W})$ for some locally finite semigroup variety $\mathbf{W}$ and
\item[(ii)] each Zimin word is an $[$involutory$]$ isoterm relative to $\mathbf{V}$.
\end{enumerate}
\end{theorem}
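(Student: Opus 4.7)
The plan is to deduce the theorem from Proposition~\ref{pr:sapir}. Condition~(ii) of that proposition coincides verbatim with our hypothesis~(ii), so the entire task is to verify its condition~(i): every member of $\mathbf{V}$ satisfying $x^2\bumpeq 0$ must be locally finite. In the involution setting, a \fg\ involution subalgebra has a \fg\ semigroup reduct and vice versa, so it suffices to establish local finiteness of the semigroup reduct of any such member.

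Accordingly, I would fix $\mathcal{S}\in\mathbf{V}$ satisfying $x^2\bumpeq 0$ and write $\mathcal{S}_0$ for its semigroup reduct. By~(i) we have $\mathcal{S}_0\in\var(\mathbf{Com}\malcev\mathbf{W})$. Because $\mathbf{Com}\malcev\mathbf{W}$ is closed under subsemigroups and direct products (by the closure properties of Mal'cev products noted just before the theorem), Birkhoff's $\mathbf{HSP}$ reduces to $\mathbf{H}$ alone, so every member of $\var(\mathbf{Com}\malcev\mathbf{W})$ is a homomorphic image of a member of $\mathbf{Com}\malcev\mathbf{W}$. Fix such a surjection $\psi\colon\mathcal{T}\twoheadrightarrow\mathcal{S}_0$ with $\mathcal{T}\in\mathbf{Com}\malcev\mathbf{W}\subseteq\mathbf{Com}\malcev\mathbf{L}$.

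Now let $S'$ be any \fg\ subsemigroup of $\mathcal{S}_0$, generated by $s_1,\dots,s_k$. Lift each $s_i$ to some $t_i\in\psi^{-1}(s_i)$ and let $\mathcal{T}'$ be the subsemigroup of $\mathcal{T}$ they generate; then $\mathcal{T}'\in\mathbf{Com}\malcev\mathbf{L}$ and $\psi$ restricts to a surjection $\psi'\colon\mathcal{T}'\twoheadrightarrow S'$. Lemma~\ref{lm:core} applied to $\mathcal{T}'$ yields that the Rees quotient $\mathcal{T}'/I'$, where $I'$ is the ideal generated by the squares of $\mathcal{T}'$, is locally finite; since it is \fg\ (as a quotient of the \fg\ semigroup $\mathcal{T}'$), it is in fact finite. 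Because $S'$ satisfies $x^2\bumpeq 0$, its unique absorbing zero equals every square, so $\psi'(I')=\{0_{S'}\}$, and $\psi'$ therefore factors through the quotient $\mathcal{T}'\twoheadrightarrow\mathcal{T}'/I'$. Hence $S'$ is a homomorphic image of the finite semigroup $\mathcal{T}'/I'$, so $S'$ is finite, as required.

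The main obstacle, and the step that demands care, is arranging the Rees-quotient factorization cleanly from a hypothesis that only mentions $S'$: one must verify first that the finitely generated lift $\mathcal{T}'$ still lies in $\mathbf{Com}\malcev\mathbf{L}$ (which works because $\mathbf{Com}\malcev\mathbf{W}$ is closed under subsemigroups and $\mathbf{W}\subseteq\mathbf{L}$), and second that $x^2\bumpeq 0$ in $S'$ truly collapses all of $I'$ (which it does, since every square of $S'$ equals the common absorbing zero and an ideal generated by such elements maps into a single absorbing element). Once these points are in place, Lemma~\ref{lm:core} supplies local finiteness of $\mathcal{T}'/I'$, finite generation upgrades this to outright finiteness, and Proposition~\ref{pr:sapir} delivers the nonfinite basis conclusion uniformly in the plain and involution cases.
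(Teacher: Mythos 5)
Your proposal is correct and follows essentially the same route as the paper: reduce via Proposition~\ref{pr:sapir} to showing that members of $\mathbf{V}$ satisfying $x^2\bumpeq 0$ are locally finite, pass to semigroup reducts, realize them as homomorphic images of semigroups in $\mathbf{Com}\malcev\mathbf{W}$, and factor through the Rees quotient by the ideal generated by squares, which Lemma~\ref{lm:core} shows is locally finite. The only (harmless) difference is that you localize to \fg\ subsemigroups and their lifts before invoking Lemma~\ref{lm:core}, whereas the paper applies the lemma once to the whole preimage $\mathcal{T}$ and concludes directly that $\mathcal{S}$ is a homomorphic image of the locally finite $\mathcal{T}/I$.
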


\begin{proof}
By Proposition~\ref{pr:sapir}, it suffices to verify that all members of $\mathbf{V}$ satisfying $x^2\bumpeq 0$ are locally finite. Since
an involution semigroup is locally finite if and only if so is its semigroup reduct, it suffices to do so for the semigroup reducts of the
members of $\mathbf{V}$. Let $\mathbf{W}$ be a locally finite semigroup variety as per condition (i). We need to check that each semigroup
$\mathcal{S}\in\var(\mathbf{Com}\malcev\mathbf{W})$ which satisfies $x^2\bumpeq 0$ is locally finite. As we observed prior to the
formulation of the theorem, $\mathcal{S}$ is a homomorphic image of a semigroup $\mathcal{T}\in\mathbf{Com}\malcev\mathbf{W}$; let
$\varphi$ stand for the corresponding homomorphism. Consider the ideal $I$ in $\mathcal{T}$ generated by $\{t^2\mid t\in \mathcal{T}\}$.
Then $I\subseteq 0\varphi^{-1}$, and therefore, the homomorphism $\varphi$ factors through $\mathcal{T}/I$ which is locally finite by Lemma
\ref{lm:core}. Consequently, $\mathcal{S}$ is also locally finite.
\end{proof}

\begin{remark}
It follows immediately from the proof of Lemma \ref{lm:core} that Theorem~\ref{thm:main} remains valid if we replace the variety
$\mathbf{Com}$ of all commutative semigroups by an arbitrary semigroup variety all of whose periodic members are locally finite.
\end{remark}

\begin{remark}
For a locally finite [involution] semigroup variety $\mathbf{V}$, condition (i) is trivially satisfied with $\mathbf{W}=\mathbf{V}$. In
this case, condition (ii) is sufficient for $\mathbf{V}$ to be \nfb; moreover, $\mathbf{V}$ then is even \emph{inherently nonfinitely
based}, i.e., it is not contained in any \fb\ locally finite variety. The corresponding result is captured by \citep{Sa87} for plain \sgps\
and by \citep{ADV12a} for \iss. It follows that the novelty in the present paper, though not always explicitly mentioned, is about
\textbf{infinite} [involution] semigroups, or, to be more precise, [involution] semigroups which do not generate a locally finite variety.
\end{remark}

\begin{remark}
Proposition \ref{pr:sapir} and therefore Theorem \ref{thm:main} formulate, in fact, sufficient conditions that the variety in question be
not only \nfb\ but even be \textbf{of infinite axiomatic rank}, that is, there is no basis for the equational theory that uses only
finitely many variables. Consequently, in all our applications, the respective [involution] semigroups are also not only \nfb\ but even of
infinite axiomatic rank. This is worth registering because an infinite [involution] semigroup can be \nfb\ but of finite axiomatic rank.
\end{remark}

\begin{remark}
\label{rm:interval} If two given varieties $\mathbf{X}$ and $\mathbf{Y}$ of [involution] semigroups satisfy
$\mathbf{X}\subseteq\mathbf{Y}$, and $\mathbf{Y}$ satisfies condition (i) while $\mathbf{X}$ satisfies condition (ii), then all varieties
$\mathbf{V}$ such that $\mathbf{X}\subseteq \mathbf{V}\subseteq\mathbf{Y}$ satisfy both conditions, and therefore, are \nfb. Stated this
way, Theorem~\ref{thm:main} may be used to produce intervals consisting entirely of \nfb\ varieties in the lattice of [involution]
semigroup varieties. We conclude this section with an example of such an application.

For two varieties $\mathbf{V}$ and $\mathbf{W}$, we denote by $\mathbf{V}\vee\mathbf{W}$ their \emph{join}, i.e., the least variety
containing both $\mathbf{V}$ and $\mathbf{W}$. \citet{SV94} proved that for each locally finite \sgp\ variety $\mathbf{W}$ which contains
the variety $\mathbf{B}$ of all \emph{bands} (idempotent \sgps), the join $\mathbf{Com}\vee\mathbf{W}$ is \nfb. More precisely, in
\citet{SV94} it is shown that each Zimin word is an isoterm relative to $\mathbf{Com}\vee\mathbf{B}$ and each member of $\mathbf{Com}\vee
\mathbf{W}$ which satisfies the identity $x^2\bumpeq 0$ is locally finite (the latter by an argument that has been refined in the proof of
Lemma \ref{lm:core}). By Theorem \ref{thm:main} it follows that each variety $\mathbf{V}$ for which $\mathbf{Com}\vee\mathbf{B}\subseteq
\mathbf{V}\subseteq\var(\mathbf{Com}\malcev\mathbf{W})$ is \nfb. Notice that $\mathbf{Com}\vee\mathbf{W}\subseteq
\var(\mathbf{Com}\malcev\mathbf{W})$ so that the quoted result from \citep{SV94} appears as  special case.

One can obtain an analogous result for involution semigroups if $\mathbf{B}$ is replaced by the variety $\mathbf{B}^\star$ of all bands
with involution and commutative semigroups are considered to be equipped with trivial involution (for the verification that Zimin words are
involutory isoterms relative to $\mathbf{Com}\vee\mathbf{B}^\star$ one can use Lemma \ref{lm:twisted} formulated in the next section).
\end{remark}

\section{Applications}
\label{sec:applications}

For every $n$ there is an injective semigroup homomorphism $\mathcal{K}_n\hookrightarrow \mathcal{K}_{n+1}$ (induced by the map $c\mapsto
c$, $h_i\mapsto h_i$ for $i=1,\dots,n-1$) which is compatible with the reflection. Consequently, for every $n$ we have the inclusion
$\var\mathcal{K}_n\subseteq \var\mathcal{K}_{n+1}$. As mentioned earlier, $\mathcal{K}_n\le\mathcal{W}_n$ whence
$\var\mathcal{K}_n\subseteq \var\mathcal{W}_n$ for every $n$. These inclusions are true if the respective structures are considered either
as semigroups or as involution semigroups with respect to the reflection. We start with applying Theorem~\ref{thm:main} to the Kauffman
monoids $\mathcal{K}_n$ and the wire monoids $\mathcal{W}_n$ with $n\ge3$.

\begin{theorem}
\label{thm:kauffman} Let $n\ge3$ and consider $\mathcal{K}_3$ and $\mathcal{W}_n$ either as semigroups or as involution semigroups with
respect to reflection. Then every $[$involution$]$ semigroup variety $\mathbf{V}$ such that
$\var\mathcal{K}_3\subseteq\mathbf{V}\subseteq\var\mathcal{W}_n$ is \nfb.
\end{theorem}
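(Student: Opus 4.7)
The plan is to apply Theorem~\ref{thm:main} in the interval form spelled out in Remark~\ref{rm:interval}, with lower variety $\mathbf{X} := \var\mathcal{K}_3$ and upper variety $\mathbf{Y} := \var\mathcal{W}_n$. The inclusion $\mathbf{X}\subseteq \mathbf{Y}$ is immediate from the chain of reflection-compatible embeddings $\mathcal{K}_3 \hookrightarrow \mathcal{K}_n \hookrightarrow \mathcal{W}_n$ recorded right before the theorem. It then suffices to verify condition~(i) of Theorem~\ref{thm:main} for $\mathbf{Y}$ and condition~(ii) for $\mathbf{X}$.

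For condition~(i), the key input is Lemma~\ref{lm:brauer}: the surjective homomorphism $\varphi\colon \mathcal{W}_n \twoheadrightarrow \mathcal{B}_n$ sends every idempotent of $\mathcal{B}_n$ to a commutative subsemigroup of $\mathcal{W}_n$, which says precisely that $\mathcal{W}_n \in \mathbf{Com} \malcev \var\mathcal{B}_n$. Since $\mathcal{B}_n$ is finite, the witness $\mathbf{W} := \var\mathcal{B}_n$ is a locally finite semigroup variety, and so $\var\mathcal{W}_n \subseteq \var(\mathbf{Com} \malcev \mathbf{W})$. The same witness works in the involution setting, because condition~(i) of Theorem~\ref{thm:main} constrains only semigroup reducts, and the reduct of $\mathcal{W}_n$ equipped with the reflection is just the plain monoid $\mathcal{W}_n$.

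The substantial part of the proof is condition~(ii): every Zimin word $Z_k$ must be an (involutory) isoterm for $\var\mathcal{K}_3$. This is the main obstacle. I would exhibit, for each $k$, an explicit substitution $\sigma\colon \{x_1,\dots,x_k\} \to \mathcal{K}_3$ such that the value of any word $w(x_1,\dots,x_k)$ under $\sigma$ in $\mathcal{K}_3$ determines the word $w$, provided $w$ a~priori could participate in an identity $Z_k \bumpeq w$. The natural handle is the geometric incarnation $\mathcal{K}_3 \cong \mathcal{C}_3$: the wire diagram of a product records the connection pattern of its factors, while the circle count acts as a hidden additive counter that is sensitive to the alternating multiplicative structure of $Z_k = Z_{k-1} x_k Z_{k-1}$. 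Carefully chosen substitutions built from $h_1$, $h_2$ and powers of $c$ should force both the wire pattern and the circle count of $w(\sigma)$ to encode the nested Zimin shape uniquely. For the involutory version under the reflection, I would exploit the fact that reflection fixes each hook, which allows a symmetric handling of $x_i$ and $x_i^\star$; the formal reduction device for this is the anticipated Lemma~\ref{lm:twisted} already flagged in Remark~\ref{rm:interval}.

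Once both conditions are in place, Theorem~\ref{thm:main} yields that every (involution) variety $\mathbf{V}$ with $\var\mathcal{K}_3 \subseteq \mathbf{V} \subseteq \var\mathcal{W}_n$ is nonfinitely based, as desired. The principal difficulty is again the isoterm step: $\mathcal{K}_3$ is generated by only three elements, so the pool of candidate substitutions is small and the combinatorics of products in two noncommuting hooks is quite constrained. Conversely, this same constraint is what makes the circle counter an effective invariant for separating $Z_k$ from competing words, and this is the route I expect to take.
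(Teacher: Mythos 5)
Your overall architecture---Remark~\ref{rm:interval} with $\mathbf{X}=\var\mathcal{K}_3$, $\mathbf{Y}=\var\mathcal{W}_n$, and condition~(i) witnessed by $\mathbf{W}=\var\mathcal{B}_n$ via Lemma~\ref{lm:brauer}---coincides exactly with the paper's proof, and that part is complete and correct. The genuine gap is condition~(ii): you correctly identify it as the substantial step, but you never actually prove it. What you offer is a research plan (``I would exhibit, for each $k$, an explicit substitution\dots'', ``carefully chosen substitutions\dots should force\dots'') with no construction and no verification, so as written the proof is incomplete precisely at the point you yourself call the main obstacle. Moreover, the invariant you propose to lean on---the circle count as a ``hidden additive counter''---is unlikely to do the work you want: the number of circles is a commutative (indeed, purely additive) invariant of a product, whereas distinguishing $Z_k$ from a competing word $w$ with the same letter multiplicities requires an invariant sensitive to the \emph{order} of factors. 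Trying to encode the nested Zimin shape through circle arithmetic in a three-generated monoid is a genuinely hard combinatorial problem that your sketch does not begin to solve.

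The paper's route shows that circles are in fact irrelevant here and the whole difficulty evaporates: let $C$ be the ideal of $\mathcal{K}_3$ generated by $c$; then $\mathcal{K}_3\setminus C=\{1,h_1,h_2,h_1h_2,h_2h_1\}$ and the Rees quotient $\mathcal{K}_3/C$ is the $6$-element Brandt monoid $B_2^1$ (the images $a,b$ of $h_1,h_2$ satisfy $a^2=b^2=0$, $aba=a$, $bab=b$). Since $B_2^1$ is a homomorphic image of $\mathcal{K}_3$, it lies in $\var\mathcal{K}_3$, and every Zimin word is an isoterm relative to $B_2^1$ by a classical result of Sapir. For the involution version one still needs to check the hypothesis of Lemma~\ref{lm:twisted}, which you invoke but do not verify: the ideal $C$ is closed under reflection, the induced involution on $B_2^1$ swaps $ab$ and $ba$, and $\{ab,ba,0\}$ is then an involution subsemigroup isomorphic to $\TSL$, so $\TSL\in\var\mathcal{K}_3$ and the lemma applies. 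You should replace your sketched substitution argument with this quotient argument (or else actually carry out the substitutions, which would amount to reproving Sapir's lemma in a harder setting).
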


\begin{proof}
We invoke Theorem~\ref{thm:main} in the form of Remark~\ref{rm:interval} and show that $\var\mathcal{W}_n$ satisfies (i) and
$\var\mathcal{K}_3$ satisfies (ii). Thus, we are to check that the semigroup $\mathcal{W}_n$ belongs to the Mal'cev product of
$\mathbf{Com}$ with a~locally finite semigroup variety and that each Zimin word is an [involutory] isoterm relative to $\mathcal{K}_3$.

The first claim readily follows from Lemma~\ref{lm:brauer}. Indeed, by this lemma there is a homomorphism
$\varphi:\mathcal{W}_n\twoheadrightarrow \mathcal{B}_n$ with the property that for every idempotent in $\mathcal{B}_n$, its inverse image
under $\varphi$ is a commutative subsemigroup in $\mathcal{W}_n$. This immediately yields that $\mathcal{W}_n$ belongs to the Mal'cev
product $\mathbf{Com}\malcev\var\mathcal{B}_n$, and $\var\mathcal{B}_n$ is locally finite as the variety generated by a finite algebra
\cite[see][Theorem~10.16]{BS81}.

In order to show that Zimin words are isoterms relative to $\mathcal{K}_3$, consider the ideal $C$ of $\mathcal{K}_3$ generated by $c$.
Clearly, $\mathcal{K}_3\setminus C=\{1,h_1,h_2,h_1h_2,h_2h_1\}$. If we denote the images of $h_1$ and $h_2$ in the Rees quotient
$\mathcal{K}_3/C$ by $a$ and $b$ respectively, then the relations of $\mathcal{K}_3$ translate into the following relations for $a$ and
$b$:
$$a^2=0,\ b^2=0,\ aba=a,\ bab=b.$$
These relations define the 6-element Brandt monoid $B_2^1$ (in the class of all monoids with 0). Thus, $\mathcal{K}_3/C$ satisfies the
relations of $B_2^1$ and the Rees quotient also consists of 6 elements, so that $\mathcal{K}_3/C\cong B_2^1$. It is well known
\citep[see][Lemma 3.7]{Sa87} that each Zimin word is an isoterm relative to $B_2^1$. This completes the proof in the plain \sgp\ case.

If we consider $\mathcal{K}_3$ as an \is\ under reflection, we can employ the approach of \citet{ADPV14}. Recall that the 3-element
\emph{twisted semilattice} is the \is\ $\TSL=\langle\{e,f,0\},\cdot,{}^\star\rangle$ in which $e^2=e$, $f^2=f$ and all other products are
equal to $0$, while the unary operation is defined by $e^\star=f$, $f^\star=e$, and $0^\star=0$. The following observation has been made in
the proof of Theorem~3.1 in \citep{ADPV14}.

\begin{lemma}
\label{lm:twisted} Let $\mathcal{T}=\langle T,\cdot,{}^\star\rangle$ be an \is\ such that each Zimin word is an isoterm relative to its
\sgp\ reduct $\langle T,\cdot\rangle$. If the $3$-element twisted semilattice $\TSL$ belongs to the variety $\var\mathcal{T}$, then each
Zimin word is also an involution isoterm relative to $\mathcal{T}$.
\end{lemma}

Clearly, the ideal $C$ of $\mathcal{K}_3$ is closed under reflection, which therefore induces an involution on $\mathcal{K}_3/C\cong B_2^1$.
The latter involution swaps the idempotents $ab$ and $ba$ and fixes all other elements of $B_2^1$ whence the subset $\{ab,ba,0\}$ of
$B_2^1$ constitutes an involution subsemigroup isomorphic to $\TSL$. Hence $\TSL$ belongs to the variety generated by $\mathcal{K}_3$ as an
\is\ under reflection and Lemma~\ref{lm:twisted} applies.
\end{proof}

The situation is somewhat more delicate if we consider $\mathcal{K}_n$ and $\mathcal{W}_n$ as \iss\ under rotation; we denote these \iss\
by $\mathcal{K}_n^\rho$ and $\mathcal{W}_n^\rho$ respectively. For every $n$ we have the following embeddings.
\begin{trivlist}
\item[$\bullet$] $\mathcal{K}_n^\rho\hookrightarrow\mathcal{K}_{n+2}^\rho$ and $\mathcal{W}_n^\rho\hookrightarrow\mathcal{W}_{n+2}^\rho$.
These embeddings are obtained by adding one $t$-wire on top and one on bottom of each chip; for the case of Kauffman monoids, the embedding
can be alternatively defined in terms of generators: it is induced by the map $c\mapsto c$, $h_i\mapsto h_{i+1}$ for $i=1,\dots,n-1$.
\item[$\bullet$]  $\mathcal{K}_n^\rho\hookrightarrow\mathcal{K}_{2n}^\rho$ and $\mathcal{W}_n^\rho\hookrightarrow\mathcal{W}_{2n}^\rho$.
These embeddings are obtained by `doubling' each chip; in terms of generators for $\mathcal{K}_n^\rho$, the embedding is  induced by the
map $c\mapsto c^2$, $h_i\mapsto h_ih_{n+i}$ for $i=1,\dots, n-1$.
\item[$\bullet$] $\mathcal{W}_{2n}^\rho\hookrightarrow\mathcal{W}_{2n+1}^\rho$. The embedding is obtained by inserting a $t$-wire just into
the middle of each chip.
\item[$\bullet$]  $\mathcal{K}_n^\rho\hookrightarrow\mathcal{W}_n^\rho$. This is the canonical embedding.
\end{trivlist}
It follows that $\var\mathcal{K}_3^\rho\subseteq\var\mathcal{W}_n^\rho$ for $n=3$ and each $n\ge 5$, and
$\var\mathcal{K}_4^\rho\subseteq\var\mathcal{W}_n^\rho$ for each $n\ge 4$. We do not know whether
$\var\mathcal{K}_3^\rho\subseteq\var\mathcal{W}_4^\rho$ or $\var\mathcal{K}_3^\rho\subseteq\var\mathcal{K}_4^\rho$. In any case, we have a
version of Theorem \ref{thm:kauffman} that is well sufficient for our purposes.

\begin{theorem}
\label{thm:kauffman-rot} Let $m\ge 4$; each variety $\mathbf{V}$ of involution semigroups satisfying
$\var\mathcal{K}_3^\rho\subseteq \mathbf{V}\subseteq \var\mathcal{W}_{m+1}^\rho$ or
$\var\mathcal{K}_4^\rho\subseteq \mathbf{V}\subseteq \mathcal{W}_m^\rho$ is \nfb.
\end{theorem}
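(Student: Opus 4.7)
The plan is to invoke Theorem~\ref{thm:main} via Remark~\ref{rm:interval} separately for each of the two intervals. In each case I will verify that the upper bound $\var\mathcal{W}_n^\rho$ satisfies condition~(i) and that the respective lower bound, $\var\mathcal{K}_3^\rho$ for the first chain and $\var\mathcal{K}_4^\rho$ for the second, satisfies condition~(ii).

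Condition~(i) carries over essentially verbatim from the proof of Theorem~\ref{thm:kauffman}. By Lemma~\ref{lm:brauer}, the forgetting map $\varphi\colon\mathcal{W}_n^\rho\twoheadrightarrow\mathcal{B}_n^\rho$ is a surjective homomorphism of \iss\ under rotation whose fibers over idempotents are commutative subsemigroups of $\mathcal{W}_n$. At the level of semigroup reducts this witnesses $\mathcal{W}_n\in\mathbf{Com}\malcev\var\mathcal{B}_n$, and $\var\mathcal{B}_n$ is locally finite by finiteness of $\mathcal{B}_n$, which is all condition~(i) requires.

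For condition~(ii) I intend to apply Lemma~\ref{lm:twisted}. Zimin words are plain isoterms for the semigroup reduct of $\mathcal{K}_3$ (this is established in the proof of Theorem~\ref{thm:kauffman} via the $B_2^1$-quotient) and hence for the reduct of every $\mathcal{K}_n$ with $n\ge3$. So only the inclusion $\TSL\in\var\mathcal{K}_3^\rho$ (respectively $\TSL\in\var\mathcal{K}_4^\rho$) remains to be established.

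For $\mathcal{K}_4^\rho$ the construction is reasonably direct. Since the rotation in $\mathcal{K}_4$ acts by $h_i\mapsto h_{4-i}$, the idempotents $e:=h_1h_2$ and $f:=h_2h_3$ satisfy $e^\rho=h_2^\rho h_1^\rho=h_2h_3=f$, so rotation swaps them. The cross-product $ef=h_1h_2^2h_3=ch_1h_2h_3$ lies in the ideal $C$ of $\mathcal{K}_4$ generated by $c$, whereas $fe=h_2h_3h_1h_2$ is rotation-fixed (indeed $(fe)^\rho=e^\rho f^\rho=fe$). Consequently the ideal $I$ of $\mathcal{K}_4$ generated by $C\cup\{fe\}$ is closed under rotation. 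A $t$-wire count via Lemma~\ref{lm:brauer} (the wire-diagram of $fe$ has no $t$-wires, so neither does any product $x\cdot fe\cdot y$, while each of $e$ and $f$ has two $t$-wires) confirms that $e,f\notin I$. Hence $\{[e],[f],0\}$ is an involution subsemigroup of $\mathcal{K}_4^\rho/I\in\var\mathcal{K}_4^\rho$ isomorphic to $\TSL$, and Lemma~\ref{lm:twisted} disposes of the case.

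The $\mathcal{K}_3^\rho$ case is the main obstacle. The analogous recipe fails because in $\mathcal{K}_3/C\cong B_2^1$ the rotation fixes both candidate idempotents $h_1h_2$ and $h_2h_1$ (as $(h_1h_2)^\rho=h_2^\rho h_1^\rho=h_1h_2$), so the direct imitation only produces a three-element semilattice with trivial involution, not $\TSL$. I would locate $\TSL$ instead inside a suitable quotient of a power $(\mathcal{K}_3^\rho)^k$, taking the \is-closed subsemigroup generated by rotation-swapped pairs such as $(h_1,h_2)$ and $(h_2,h_1)$ in $(\mathcal{K}_3^\rho)^2$ and then passing to a rotation-closed Rees congruence that nullifies the obstructing cross-products while keeping the chosen pair distinct and non-zero. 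The delicate point, and the hardest step of the whole proof, is to arrange this quotient so that the resulting $\TSL$-structure truly survives rather than collapsing under the various reductions forced by the relations of $\mathcal{K}_3$.
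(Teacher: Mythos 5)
Your treatment of condition (i) and of the case $\ell=4$ is sound. For $\ell=4$ you in fact take a different route from the paper: the paper simply quotes the known fact that Zimin words are involutory isoterms for the Jones monoid $\mathcal{J}_4$ under rotation \citep[Theorem~2.13]{ADV12b} and pulls this back along the homomorphism $\varphi$ of Lemma~\ref{lm:jones}, whereas you exhibit $\TSL$ directly inside the Rees quotient of $\mathcal{K}_4^\rho$ by the rotation-closed ideal generated by $c$ and $fe=h_2h_3h_1h_2$. Your verification is correct: $ef=ch_1h_2h_3$ lies in $C$, the element $fe$ is rotation-fixed and its diagram has no $t$-wires while those of $e$ and $f$ have two each, so $e,f\notin I$ and $\{[e],[f],0\}$ is indeed a copy of $\TSL$. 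This is a legitimate, self-contained alternative to the citation.

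The case $\ell=3$, however, is a genuine gap, and it is exactly the point at which the paper must abandon the $\TSL$/Lemma~\ref{lm:twisted} strategy: the authors state explicitly that they do not know whether $\TSL$ belongs to $\var\mathcal{K}_3^\rho$. Your sketched remedy cannot work in the form you describe. The only idempotents of $\mathcal{K}_3$ are $1$, $h_1h_2$ and $h_2h_1$, and all three are fixed by rotation (as you yourself observe, $(h_1h_2)^\rho=h_2^\rho h_1^\rho=h_1h_2$). Since the involution on a direct power acts coordinatewise, every idempotent of every involution subsemigroup of every power of $\mathcal{K}_3^\rho$ is again rotation-fixed; and in a Rees quotient $S/I$ the nonzero idempotents are precisely the images of the idempotents of $S$ lying outside $I$. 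Consequently no rotation-closed Rees congruence on an involution subsemigroup of $(\mathcal{K}_3^\rho)^k$ can yield two distinct nonzero idempotents interchanged by the involution, which is what $\TSL$ demands; in particular your elements $(h_1,h_2)$ and $(h_2,h_1)$, not being idempotent, cannot become nonzero idempotents in any such quotient. The paper circumvents the issue entirely with a direct substitution argument: assuming $Z_n\bumpeq w$ holds in $\mathcal{K}_3^\rho$, one first substitutes $c$ for a single letter and $1$ for the rest to match occurrence counts and alphabets; then, for a letter $x_i$ such that $x_i^\star$ occurs in $w$, one substitutes $h_1$ for $x_i$ and $1$ elsewhere, so that $Z_n$ evaluates to $c^{2^{n-i}-1}h_1$ while $w$ evaluates to a product of $2^{n-i}$ factors from $\{h_1,h_2\}$ that contains both $h_1h_2$ and $h_2h_1$ as subproducts and hence produces at most $2^{n-i}-3$ circles --- a contradiction. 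To complete the first chain of the theorem you would need to supply an argument of this kind (or genuinely resolve the open question about $\TSL$), not merely a plan for one.
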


\begin{proof}
We have already shown in the proof of Theorem \ref{thm:kauffman} that the semigroup reducts of all members of $\var\mathcal{W}_m^\rho$
satisfying $x^2\bumpeq 0$ are locally finite. In order to apply Theorem~\ref{thm:main} (in the form of Remark~\ref{rm:interval}), it
remains to show that each Zimin word is an involutory isoterm relative to $\var\mathcal{K}_\ell^\rho$ for $\ell=3$ and $\ell=4$. For
$\ell=4$ this follows from the analogous fact for the Jones monoid $\mathcal{J}_4$ considered as an \is\ under rotation (this fact has been
shown in \citep[][Theorem~2.13]{ADV12b}); by Lemma~\ref{lm:jones} the latter monoid is a quotient of $\mathcal{K}_4^\rho$.

It remains to consider the case $\ell=3$. We do not know whether or not $\TSL$ belongs to the variety $\var\mathcal{K}_3^\rho$ hence we do
not know if we can proceed as in the proof of Theorem \ref{thm:kauffman}. Nevertheless, we will show that each Zimin word is an involution
isoterm relative to $\mathcal{K}_3^\rho$.

Arguing by contradiction, assume that for some $n$ and some involutory word $w$, the identity $Z_n\bumpeq w$ holds in $\mathcal{K}_3^\rho$.
First we observe that each letter $x_i$, $i=1,2,\dots,n$, occurs the same number of times in $Z_n$ and $w$. For this, we substitute $c$ for
$x_i$ and 1 for all other letters. The value of the word $Z_n$ under this substitution is $c^{2^{n-i}}$ since it is easy to see that $x_i$
occurs $2^{n-i}$ times in $Z_n$. Similarly, since $c^\rho=c$, the value of $w$ is $c^k$, where $k$ is the number of occurrences of $x_i$ in
$w$. As $Z_n\bumpeq w$ holds in $\mathcal{K}_3^\rho$, the two values should coincide whence $k=2^{n-i}$. In a similar manner one can verify
that the only letters occurring in $w$ are $x_1,x_2,\dots,x_n$.

We have already shown that $Z_n$ is an isoterm relative to $\mathcal{K}_3$ considered as a plain \sgp. Hence $w$ must be a proper
involutory word, that is, it has at least one occurrence of a `starred' letter. We fix an $i\in\{1,2,\dots,n\}$ such that $x_i^\star$
occurs in $w$ and substitute $h_1$ for $x_i$ and 1 for all other letters. It is easy to calculate that the value of the word $Z_n$ under
this substitution is $c^{2^{n-i}-1}h_1$. Since $h_1^\rho=h_2$ in $\mathcal{K}_3^\rho$ and $x_i$ occurs $2^{n-i}$ times in $w$, the word $w$
evaluates to a product $p$ of $2^{n-i}$ factors each of which is either $h_1$ or $h_2$ and at least one of which is $h_2$. As $Z_n\bumpeq
w$ holds in $\mathcal{K}_3^\rho$, the value of $p$ must coincide with $c^{2^{n-i}-1}h_1$, which is only possible when the first and the
last factors of $p$ are $h_1$. Then the relations \eqref{eq:TL2} and \eqref{eq:TL4} ensure that the value of $p$ is $c^kh_1$, where $k$ is
the total number of occurrences of the factors $h_1h_1$ and $h_2h_2$ in $p$. However, $p$ has at least one occurrence of $h_1h_2$ and at
least one occurrence of $h_2h_1$, and therefore $k\le 2^{n-i}-3$, a contradiction.
\end{proof}

\begin{remark}
To get a version of Theorem~\ref{thm:kauffman} that could be stated and justified without any appealing to geometric considerations, one
should change $\mathcal{W}_n$ to $\mathcal{K}_n$ in the formulation of Theorem~\ref{thm:kauffman} and refer to Lemma~\ref{lm:jones} instead
of Lemma~\ref{lm:brauer} in its proof. (Recall that we outlined a `picture-free' proof of Lemma~\ref{lm:jones} at the end of
Section~\ref{sec:wire}.) This reduced version of Theorem~\ref{thm:kauffman} still suffices to solve the finite basis problem for the
identities holding in the Kauffman monoids. The same observation applies to Theorem~\ref{thm:kauffman-rot}.
\end{remark}

\begin{remark}
Theorems~\ref{thm:kauffman} and \ref{thm:kauffman-rot} imply that each of the monoids $\mathcal{W}_n$ and $\mathcal{K}_n$ with $n\ge3$ is
\nfb\ as both a plain \sgp\ and an \is\ with either reflection or rotation. For the sake of completeness, we mention that the monoids
$\mathcal{W}_2$ and $\mathcal{K}_2$ are easily seen to be commutative and hence they are \fb\ by a classic result of \citet{Pe69}.
Moreover, both reflection and rotation act trivially in $\mathcal{W}_2$, and therefore, $\mathcal{W}_2$ and $\mathcal{K}_2$ are also \fb\
as \iss.
\end{remark}

In a similar manner, Theorem~\ref{thm:main} allows one to solve the finite basis problem for many other species of infinite diagram monoids
in the setting of both plain and \iss. These applications of Theorem~\ref{thm:main} will be published in a separate paper, while here we
restrict ourselves to demonstrating another application of rather a different flavor.

Recall the classic \Rm\ construction \cite[see][Chapter~3, for details and for the explanantion of the role played by this construction in
the structure theory of semigroups]{CP61}. Let $\mathcal{G}=\langle G,\cdot\rangle$ be a semigroup, $0$ a symbol beyond $G$, and
$I,\Lambda$ non-empty sets. Given a $\Lambda\times I$-matrix $P=(p_{\lambda i})$ over $G\cup\{0\}$, we define a multiplication~$\cdot$ on
the set $(I\times G\times\Lambda)\cup\{0\}$ by the following rules:
\begin{gather*}
a\cdot 0=0\cdot a:=0\ \text{ for all } a\in (I\times G\times\Lambda)\cup \{0\},\\
(i,g,\lambda)\cdot(j,h,\mu):=\begin{cases}
(i,gp_{\lambda j}h,\mu)&\ \text{if}\ p_{\lambda j}\ne0,\\
0 &\ \text{if}\ p_{\lambda j}=0.
\end{cases}
\end{gather*}
Then $\langle(I\times G\times\Lambda)\cup \{0\},\cdot\rangle$ becomes a semigroup denoted by $\mathcal{M}^0(I,\mathcal{G},\Lambda;P)$ and
is called the \emph{\Rms\ over $\mathcal{G}$ with the sandwich matrix $P$}. For a \sgp\ $\mathcal{S}$, we let $\mathcal{S}^1$ stand for the
monoid obtained from $\mathcal{S}$ by adjoining a new identity element.

\begin{theorem}
\label{thm:Rees matrix} Let $\mathcal{G}=\langle G,\cdot\rangle$ be an abelian group and
$\mathcal{S}=\mathcal{M}^0(I,\mathcal{G},\Lambda;P)$ be a \Rms\ over $\mathcal{G}$. If the matrix $P$ has a submatrix of one of the forms
$\left(\begin{smallmatrix} a & b\\ c & 0\end{smallmatrix}\right)$ or $\left(\begin{smallmatrix} 0 & b\\ c & 0\end{smallmatrix}\right)$
where $a,b,c\in G$, or $\left(\begin{smallmatrix} e & e\\ e & d\end{smallmatrix}\right)$ where $e$ is the identity of $\mathcal{G}$ and
$d\in G$ is an element of infinite order, then the monoid $\mathcal{S}^1$ is \nfb.
\end{theorem}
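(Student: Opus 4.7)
The plan is to apply Theorem~\ref{thm:main} to $\mathbf{V}:=\var\mathcal{S}^1$, verifying (i) that $\mathbf{V}\subseteq\var(\mathbf{Com}\malcev\mathbf{W})$ for some locally finite \sgp\ variety $\mathbf{W}$, and (ii) that every Zimin word is an isoterm relative to $\mathcal{S}^1$. The main ingredients are the natural combinatorial collapse of $\mathcal{S}^1$ (which provides the abelian-group commutativity required for~(i)) together with a case-by-case substitution argument for~(ii).

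For~(i) I would use the surjective homomorphism $\varphi\colon\mathcal{S}^1\twoheadrightarrow\mathcal{T}^1$, where $\mathcal{T}:=\mathcal{M}^0(I,\{e\},\Lambda;\hat{P})$ is the combinatorial shadow obtained from $\mathcal{S}$ by replacing each nonzero entry of $P$ by the identity $e$ of $\mathcal{G}$. The $\varphi$-preimage of an idempotent $(i,e,\lambda)\in\mathcal{T}^1$ (for which $\hat{p}_{\lambda i}=e$) is exactly the coset $\{(i,g,\lambda)\mid g\in\mathcal{G}\}$, which is an abelian group and hence a commutative sub\sgp\ of $\mathcal{S}^1$. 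It follows that $\mathcal{S}^1\in\mathbf{Com}\malcev\var\mathcal{T}^1$, and the crux reduces to showing that $\var\mathcal{T}^1$ is locally finite. Since every element $x\in\mathcal{T}^1$ satisfies $x^2\in\{x,0\}$ (so that $x^3\bumpeq x^2$) and since every $n$-generated sub\sgp\ of $\mathcal{T}^1$ has at most $n^2+2$ elements, the idea is to exhibit a nontrivial Zimin identity satisfied by $\mathcal{T}^1$ and then invoke Sapir's characterization of locally finite varieties.

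For~(ii) I would argue case by case by locating, within $\var\mathcal{S}^1$, a test \sgp\ $\mathcal{Q}$ already known to admit each $Z_n$ as an isoterm; any identity $Z_n\bumpeq w$ of $\mathcal{S}^1$ is then inherited by $\mathcal{Q}$ and forces $w=Z_n$. For the submatrix $\left(\begin{smallmatrix}0&b\\c&0\end{smallmatrix}\right)$ the combinatorial shadow of the corresponding $2\times 2$ sub-\Rms\ is isomorphic to the Brandt \sgp\ $B_2$, so $B_2^1$ embeds in $\mathcal{T}^1\in\var\mathcal{S}^1$, and the classical isoterm property of $B_2^1$ used in the proof of Theorem~\ref{thm:kauffman} disposes of this case. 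For the submatrix $\left(\begin{smallmatrix}e&e\\e&d\end{smallmatrix}\right)$ with $d$ of infinite order, I would work directly in $\mathcal{S}^1$: substitutions that send each $x_i$ to an element whose iterated products accumulate powers of $d$ in the group coordinate first reveal the multiset of letters of $w$ (via the infinite order of $d$) and then, through refined substitutions separating the roles of distinct letters, pin down the exact sequence, forcing $w=Z_n$. The remaining submatrix $\left(\begin{smallmatrix}a&b\\c&0\end{smallmatrix}\right)$ is handled analogously, exploiting its single zero as a positional detector.

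The main obstacle I expect is verifying the local finiteness of $\var\mathcal{T}^1$ in full generality---particularly when the submatrix of $P$ is of Brandt type, since then $\var\mathcal{T}^1$ already contains $\var B_2^1$, which is not locally finite. A natural workaround is to invoke the Remark after Theorem~\ref{thm:main} and replace $\mathbf{Com}$ by the variety of abelian groups (whose periodic members are locally finite), which accommodates the coset preimages above even more tightly; with this flexibility one can hope to refine the quotient so that the resulting target lies in a locally finite variety tailored to the specific submatrix type.
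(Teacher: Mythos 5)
Your overall skeleton---the shadow homomorphism $\varphi\colon\mathcal{S}^1\twoheadrightarrow\mathcal{T}^1$ with commutative idempotent preimages for condition (i), and locating an isoterm witness inside $\var\mathcal{S}^1$ for condition (ii)---is the same as the paper's, but two steps do not go through as proposed. First, your route to the local finiteness of the target variety is flawed: you propose to exhibit a nontrivial Zimin identity satisfied by $\mathcal{T}^1$, which is impossible exactly in the Brandt case (there $B_2^1\in\var\mathcal{T}^1$, so every Zimin word is an isoterm for $\mathcal{T}^1$), and the obstacle you name---that $\var B_2^1$ is not locally finite---is not a real one: every variety generated by a finite algebra is locally finite \citep[see][Theorem~10.16]{BS81}. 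What fails for $B_2^1$ is finite basability (it is in fact \infb), not local finiteness of the variety it generates; you appear to have conflated the two. Your proposed workaround of replacing $\mathbf{Com}$ by abelian groups modifies the wrong factor of the Mal'cev product and would not address the issue even if it existed. The correct and much shorter route, which the paper takes, is Hall's observation that every \Rms\ over the trivial group lies in $\var A_2$, whence $\mathcal{T}^1\in\var A_2^1$ and one may take $\mathbf{W}=\var A_2^1$, locally finite because $A_2^1$ is a finite \sgp. Note also that your bound on the size of $n$-generated sub\sgps\ of $\mathcal{T}^1$ only shows that $\mathcal{T}^1$ itself is locally finite, which is not what condition (i) of Theorem~\ref{thm:main} asks for.

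Second, for condition (ii) in the case $P'=\left(\begin{smallmatrix}e&e\\e&d\end{smallmatrix}\right)$ with $d$ of infinite order, your plan to ``pin down the exact sequence through refined substitutions'' is a placeholder rather than an argument: proving directly that each $Z_n$ is an isoterm for $\mathcal{S}^1$ is precisely the delicate combinatorial step that the reduction to $B_2^1$ is designed to avoid, and you give no mechanism for excluding the candidate words $w$. The paper instead observes that $R=\bigl\{(k,d^n,\nu)\mid k\in I',\ \nu\in\Lambda',\ n\ge0\bigr\}$ is a sub\sgp\ of $\mathcal{S}$ with ideal $J$ consisting of the elements with $n\ge1$, that the Rees quotient $R/J$ is isomorphic to $A_2$, and hence that $B_2^1\in\var\mathcal{S}^1$ in this case as well; Sapir's isoterm lemma for $B_2^1$ then finishes all three cases uniformly. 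Your treatment of the two Brandt-type submatrices does match the paper's.
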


\begin{proof}
Let $\mathcal{E}=\langle\{e\},\cdot\rangle$ be the trivial group, and let $\overline{P}=(\bar p_{\lambda i})$ be the $\Lambda\times
I$-matrix over $\{e,0\}$ obtained when each non-zero entry of $P$ gets substituted by $e$. Consider the \Rms\
$\mathcal{T}=\mathcal{M}^0(I,\mathcal{E},\Lambda;\overline{P})$. It is easy to see that the map $\varphi$ defined by
$$1\mapsto 1,\ 0\mapsto 0,\ (i,g,\lambda)\mapsto(i,e,\lambda)$$
is a homomorphism from $\mathcal{S}^1$ onto $\mathcal{T}^1$. It is known \cite[see, e.g.,][proof of Theorem 3.3]{Ha91} that every \Rms\
over $\mathcal{E}$ belongs to the variety generated by the 5-element \sgp\ $A_2$ that can be defined as the \Rms\ over $\mathcal{E}$ with
the sandwich matrix $\left(\begin{smallmatrix}e & e\\ e & 0\end{smallmatrix}\right)$. Therefore $\mathcal{T}^1$ lies in the variety $\var
A_2^1$. The inverse image of an arbitrary element $(i,e,\lambda)\in\mathcal{T}$ under $\varphi$ consists of all triples of the form
$(i,g,\lambda)$ where $g$ runs over $G$. If for some $j\in I, \mu\in\Lambda$, the triple $(j,e,\mu)$ is an idempotent in $\mathcal{T}$,
then $\bar p_{\mu j}\ne 0$ whence $p_{\mu j}\ne 0$ as well. Therefore the product of any two triples
$(j,g,\mu),(j,h,\mu)\in(j,e,\mu)\varphi^{-1}$ is equal to $(j,gp_{\mu j}h,\mu)$ and this result does not depend on the order of the factors
since the group $\mathcal{G}$ is abelian. Taking into account that $0\varphi^{-1}=\{0\}$ and $1\varphi^{-1}=\{1\}$, we see that the inverse
image under $\varphi$ of every idempotent in $\mathcal{T}^1$ is a commutative subsemigroup in $\mathcal{S}^1$. Thus, $\mathcal{S}^1$
belongs to the Mal'cev product $\mathbf{Com}\malcev\var A_2^1 $, and $\var A_2^1$ is locally finite as the variety generated by a finite
algebra \cite[see][Theorem~10.16]{BS81}.

In view of Theorem~\ref{thm:main}, it remains to verify that each Zimin word is an isoterm relative to $\mathcal{S}^1$. Here we invoke the
premise that the matrix $P$ has a $2\times 2$-submatrix of a specific form. We fix such a submatrix $P'$ of one of the given forms and let
$\Lambda'=\{\lambda,\mu\}\subseteq\Lambda$ and $I'=\{i,j\}\subseteq I$ be such that $P'$ occurs at the intersection of the rows
whose indices are in $\Lambda'$ with the columns whose indices are in $I'$.

First consider the case when $P'$ is either $\left(\begin{smallmatrix} a & b\\ c & 0\end{smallmatrix}\right)$ or $\left(\begin{smallmatrix} 0 & b\\
c & 0\end{smallmatrix}\right)$. Clearly, the \Rms\ $\mathcal{U}=\mathcal{M}^0(I',\mathcal{G},\Lambda';P')$ is a sub\sgp\ of $\mathcal{S}$
whence $\mathcal{U}^1$ is a subsemigroup of $\mathcal{S}^1$. Then the image of $\mathcal{U}^1$ under the homomorphism $\varphi$ is a
subsemigroup $\mathcal{V}^1$ of $\mathcal{T}^1$ where $\mathcal{V}$ can be identified with the \Rms\ over $\mathcal{E}$ whose sandwich
matrix is either $\left(\begin{smallmatrix}0 & e\\ e & 0\end{smallmatrix}\right)$ or $\left(\begin{smallmatrix}e & e\\ e &
0\end{smallmatrix}\right)$. In the latter case, the \sgp\ $\mathcal{V}$ is isomorphic to the \sgp\ $A_2$. We have already used the fact
that every \Rms\ over $\mathcal{E}$ belongs to the variety $\var A_2$; this implies that in any case the \Rms\
$B=\mathcal{M}^0(I',\mathcal{E},\Lambda';\left(\begin{smallmatrix}0 & e\\ e & 0\end{smallmatrix}\right))$ belongs to the variety
$\var\mathcal{V}$. Hence $B^1\in\var\mathcal{V}^1$, and it is easy to verify that the bijection
$$1{\mapsto}1,\ 0{\mapsto}0,\ (i,e,\lambda){\mapsto}a,\ (j,e,\mu){\mapsto}b,\ (i,e,\mu){\mapsto}ab,\
(j,e,\lambda){\mapsto}ba$$ is an isomorphism between $B^1$ and the 6-element Brandt monoid $B_2^1$ (we have defined the latter monoid
in the proof of Theorem~\ref{thm:kauffman}). Thus, $B_2^1$ lies in the variety $\var\mathcal{S}^1$, and each Zimin word is an isoterm
relative to $B_2^1$ \citep[see][Lemma 3.7]{Sa87}.

Now suppose that $P'=\left(\begin{smallmatrix} e & e\\ e & d\end{smallmatrix}\right)$ with $d\in G$ being an element of infinite order. One
readily verifies that the set
$$R=\bigl\{(k,d^n,\nu)\mid k\in I',\ \nu\in \Lambda',\ n=0,1,2,\dots\bigr\}$$
forms a subsemigroup in $\mathcal{S}$ while the set
$$J=\bigl\{(k,d^n,\nu)\mid k\in I',\ \nu\in \Lambda',\ n=1,2,\dots\bigr\}$$
forms an ideal in $R$. It is easy to calculate that the Rees quotient $R/J$ is isomorphic to the \sgp\ $A_2$, and we again conclude that
$B_2^1$ lies in the variety $\var\mathcal{S}^1$.
\end{proof}

\begin{remark}
Suppose that $\mathcal{G}=\langle G,\cdot\rangle$ in an abelian group, $I$ is a non-empty set, $0$ is a symbol beyond $G$, and $P=(p_{ij})$
is a symmetric $I\times I$-matrix over $G\cup\{0\}$. Then one can equip the \Rms\ $\mathcal{M}^0(I,\mathcal{G},I;P)$ with an involution by
letting $0^\star:=0,\quad (i,g,j)^\star:=(j,g,i)$. A version of Theorem~\ref{thm:Rees matrix} holds also for involution monoids that are
obtained from such \iss\ by adjoining a new identity element.
\end{remark}

\begin{remark}
Theorem \ref{thm:Rees matrix} remains valid if we replace the abelian group $\mathcal{G}$ by an arbitrary semigroup
$\mathcal{H}$ from a variety $\mathbf{U}$ all of whose periodic members are locally finite. In the matrix
$\left(\begin{smallmatrix} e & e\\ e & d\end{smallmatrix}\right)$ the elements $e,d\in\mathcal{H}$ have to be chosen such that
$e^2=e$, $ed=d=de$ and $d^n\ne e$ for all positive integers $n$.
\end{remark}

\begin{remark}
Readers familiar with the role of Rees matrix semigroups in the structure theory of semigroups will notice that Theorem \ref{thm:Rees
matrix} shows that for each \emph{completely simple} semigroup $\mathcal{S}$ which admits two idempotents whose product has infinite order
and whose maximal subgroups are abelian, the monoid $\mathcal{S}^1$ is \nfb. Indeed, $\mathcal{S}$ admits a Rees matrix representation
$\mathcal{M}(I,\mathcal{G},\Lambda;P)$ (the construction mentioned above but without $0$) such that $P$ has a submatrix of the form
$\left(\begin{smallmatrix} e & e\\ e & d\end{smallmatrix}\right)$ and $d$ has infinite order in $\mathcal{G}$. The proof of Theorem
\ref{thm:Rees matrix} then shows that $\mathcal{S}^1\in \var(\mathbf{Com}\malcev\mathbf{B})$ and $A_2^1\in \var\mathcal{S}^1$ hence each
Zimin word is an isoterm relative to $\mathcal{S}^1$.
\end{remark}

\small

\noindent\textbf{Acknowledgements.} Yuzhu Chen, Xun Hu, Yanfeng Luo have been partially supported by the Natural Science Foundation of
China (projects no.\ 10971086, 11371177). M. V. Volkov acknowledges support from the Presidential Programme ``Leading Scientific Schools of
the Russian Federation'', project no.\ 5161.2014.1, and from the Russian Foundation for Basic Research, project no.\ 14-01-00524.

\end{document}